% SIAM Article Template
\documentclass[onefignum,onetabnum]{siamart171218}

% Packages and macros go here
\usepackage{amsmath,amsfonts,amssymb,bm,algorithmic,autonum}
\usepackage[normalem]{ulem}

\allowdisplaybreaks

\newcommand{\sxy}[2]{\langle#1,#2\rangle}
\newcommand{\ssxy}[2]{\langle\langle#1,#2\rangle\rangle}
\newcommand{\no}[1]{|||#1|||}
\newcommand{\bcal}[1]{\bm{\mathcal{#1}}}
\newcommand{\Ce}{\mathbb{C}}
\newcommand{\Er}{\mathbb{R}}
\newcommand{\Es}{\mathbb{S}}

% Add a serial/Oxford comma by default.

% Used for creating new theorem and remark environments
\newsiamremark{remark}{Remark}
\newsiamremark{notation}{Notation}
% Sets running headers as well as PDF title and authors
\headers{Prescribing convergence behavior of block Arnoldi and GMRES}{M. Kub\'{i}nov\'{a}, K. Soodhalter}

% Title. If the supplement option is on, then "Supplementary Material"
% is automatically inserted before the title.
\title{Admissible and attainable convergence behavior of~block Arnoldi and GMRES
	\thanks{This version dated \today.
	%\thanks{Submitted to the editors DATE.
		\funding{The work of M.\,K. was supported by the Czech Academy of Sciences through the project L100861901 (Programme for promising human resources -- postdocs) and by the Ministry of Education, Youth and Sports of the Czech Republic through the project LQ1602 (IT4Innovations excellence in science).}}}

% Authors: full names plus addresses.
\author{Marie Kub\'{i}nov\'{a}\thanks{The Czech Academy of Sciences, Institute of Geonics, Ostrava, Czech Republic
		(\email{marie.kubinova@ugn.cas.cz}, \href{http://www.ugn.cas.cz/\%7Ekubinova/}{http://www.ugn.cas.cz/\textasciitilde kubinova/}).}
	\and Kirk M. Soodhalter\thanks{Trinity College Dublin, The University of Dublin, Dublin, Ireland
		(\email{ksoodha@maths.tcd.ie}, \href{https://math.soodhalter.com/}{https://math.soodhalter.com/}).}}

\usepackage{amsopn}

\usepackage{todonotes}

% Optional PDF information
\ifpdf
\hypersetup{
  pdftitle={Prescribing convergence behavior of block Arnoldi and GMRES},
  pdfauthor={M. Kubinova, K. Soodhalter}
}
\fi

% FundRef data to be entered by SIAM
%<funding-group>
%<award-group>
%<funding-source>
%<named-content content-type="funder-name"> 
%</named-content> 
%<named-content content-type="funder-identifier"> 
%</named-content>
%</funding-source>
%<award-id> </award-id>
%</award-group>
%</funding-group>

\begin{document}

\maketitle

% REQUIRED
\begin{abstract}
  It is well-established that any non-increasing convergence curve is possible for  GMRES and a family of pairs $(A,b)$ can be constructed for which GMRES exhibits a given convergence curve with $A$ having arbitrary spectrum. No analog of this result has been established for block GMRES, wherein multiple right-hand sides are considered. By reframing the problem as a single linear system over a ring of square matrices, we develop convergence results for block Arnoldi and block GMRES. In particular, we show what convergence behavior is admissible for block GMRES and how the matrices and right-hand sides producing any admissible behavior can be constructed. Moreover, we show that the convergence of the block Arnoldi method for eigenvalue approximation can be almost fully independent of the convergence of block GMRES for the same coefficient matrix and the same starting vectors.
\end{abstract}

% REQUIRED
\begin{keywords}
  block Krylov subspace methods, multiple right-hand sides, block GMRES, convergence, spectrum, block companion matrix
\end{keywords}

% REQUIRED
\begin{AMS}
  65F10, 65F15 
\end{AMS}

\textit{The second author would like to dedicate this work to the late Richard Timoney, who was very generous with his time in explaining the basics of $^\ast$-algebras to an uninitiated new colleague.}

\section{Introduction}
The celebrated GMRES algorithm \cite{Saad1986GMRES} is an effective, widely-used iterative method for solving linear systems 
\begin{equation}
A\bm{x} = \bm{b}, \quad A\in\Ce^{m\times m}, \quad \bm{b}\in\Ce^{m},
\end{equation}
with non-Hermitian coefficient matrices. It has been shown that, in contrast to the case of Hermitian matrices, 
for non-Hermitian coefficient matrices, we cannot guarantee certain convergence speed based solely on our knowledge of the spectrum. In particular, for any given non-increasing sequence of positive numbers $f_{0}\geq f_{1}\geq\cdots\geq f_{n-1}$, one can construct a non-Hermitian matrix with
arbitrary spectrum for which GMRES produces residuals whose norms correspond
to this sequence \cite{Greenbaum1996Any}.  
In \cite{Meurant2012GMRES}, it was observed this does not mean eigenvalues are meaningless for the convergence of GMRES applied to non-normal problems.  This result simply
establishes one extreme of what role eigenvalues can play in the \textit{residual} convergence of GMRES.  

If instead we solve multiple systems with the same $A$ (i.e., a system with multiple right-hand sides) 
\begin{equation}
AX = B, \quad A\in\Ce^{m\times m}, \quad B\in\Ce^{m\times s},
\end{equation}
a direct generalization of GMRES called block GMRES exists which produces approximate solutions simultaneously for all right-hand sides.  
We would like to explore if block GMRES residual convergence admits a similar characterization as in \cite{Greenbaum1996Any}.

Most often, analysis of block GMRES takes the view of the method as a minimization of each individual residual over a sum of spaces.  This enables some basic convergence analysis but such analysis fails to capture the full picture of block GMRES behavior, which is influenced by the interaction between the different right-hand sides. The nature of this interaction is quite difficult to describe when considering block GMRES as a method treating a collection of scalar linear systems.

Indeed, some authors have taken a different approach, discussing these methods in terms of vector blocks in $\Ce^{m\times s}$, namely \cite{Frommer2017Block} and \cite{Simoncini1996Convergence}.
We demonstrate here that embracing a totally block view of this iteration greatly simplifies analysis of block GMRES and allows us to obtain clean convergence results.

The aim of the paper is to extend well-known GMRES/Arnoldi convergence results
developed in \cite{Arioli1998Krylov,DuintjerTebbens2012Any,Greenbaum1994Matrices,Greenbaum1996Any} (along with many excellent follow-up papers by subsets of the same authors) to the block case using the framework of \cite{Frommer2017Block,Frommer2019Block}. 
\medskip

The paper is organized as follows. \Cref{sec:preliminaries} introduces the block methods that will be analyzed and the framework used to perform the analysis. In \Cref{sec:admissible_conv}, we generalize the notion of non-increasing convergence curve to the block setting.  \Cref{sec:prescribing_conv} provides a characterization of matrices and starting vectors exhibiting prescribed convergence behavior, including discussion of the spectral properties of the obtained coefficient matrices. We summarize our results and formulate open questions in \Cref{sec:conclusion}. Throughout the paper, we assume exact arithmetic.
We also introduce some specialized notation for this
setting in \Cref{notation.caligraphic}.

\section{Preliminaries}\label{sec:preliminaries}

In each step, block Krylov subspace methods look for an approximation of each individual solution in the space
\begin{equation}\label{eq:krylov_subspace}
\mathcal{K}_k(A,R_0) \equiv \text{colspan}\{R_0,AR_0,\ldots,A^{k-1}R_0\} \mbox{\ \ with\ \ } R_0=B-AX_0.
\end{equation}
For simplicity of presentation, we assume $X_0 = 0$ leading to $R_0=B$ throughout the paper. 

If the columns of the block vectors $V_1,\ldots, V_k$, $V_j\in\Ce^{m\times s}$, form a basis of the subspace $\mathcal{K}_k(A,B)$, we say that $V_1,\ldots, V_k$ is a \emph{block} basis of $\mathcal{K}_k(A,B)$. The block solution $X_k$ can be represented as a \emph{block} linear combination of these block vectors:
\begin{equation}\label{eq:lin_comb}
X_k = \sum_{i=1}^{k}V_i\,D_i, \quad D_i\in\Ce^{s \times s},
\end{equation}
and we say in this setting that $X_k\in\text{blockspan}\{B,AB,\ldots,A^{k-1}B\}$.
The particular choice of $\{D_i\}_{i=1}^k$ is defined by the conditions which the method imposes on the residual $R_k = B-AX_k$; cf. \Cref{sec:block_Arnoldi}.  For an overview of block Krylov subspace methods, see, e.g., \cite[sec. 6.12]{Saad2003Iterative}.

Comparing \cref{eq:lin_comb} with the standard definition of a linear combination of a set of vectors, we see that the $s\times s$ matrices here play the role of complex numbers $\Ce$. Let $n = \lceil\frac{m}{s}\rceil$. Padding the matrix and the right-hand side by zeros as follows
\begin{equation}\label{eq:padding}
\widehat{A} \longrightarrow \begin{bmatrix}A&0\\0&I\end{bmatrix}\in\Ce^{ns\times ns}, \quad	\widehat{B} \longrightarrow \begin{bmatrix}	B\\0\end{bmatrix}\in\Ce^{ns\times s}, \quad	\widehat{X} \longrightarrow \begin{bmatrix}	X\\0\end{bmatrix}\in\Ce^{ns\times s},
\end{equation}
the unknown $X$ and block GMRES approximations thereof do not change and we can view the new matrix also as an $n \times n$ array of $s\times s$ matrices, and similarly the new right-hand side and the solution as a vector of length $n$ of $s\times s$ matrices. Thus, to simplify the presentation and without loss of (except, perhaps, at the final iteration), 
we assume hereafter that $m = ns$, and decompose $A$, $B$, and $X$ into   blocks, which we demonstrate on the following example.

Let $m=6$ and $s=2$, then $n=3$, and we can write
\begin{equation}
\underbrace{\left[{\footnotesize \begin{array}{c|c|c}
	\begin{matrix}
	a_{1,1}&a_{1,2}\\
	a_{2,1}&a_{2,2}\\
	\end{matrix}
	&
	\begin{matrix}
	a_{1,3}&a_{1,4}\\
	a_{2,3}&a_{2,4}\\
	\end{matrix}
	&
	\begin{matrix}
	a_{1,5}&a_{1,6}\\
	a_{2,5}&a_{2,6}\\
	\end{matrix}
	\\\hline
	\begin{matrix}
	a_{3,1}&a_{3,2}\\
	a_{4,1}&a_{4,2}\\
	\end{matrix}
	&
	\begin{matrix}
	a_{3,3}&a_{3,4}\\
	a_{4,3}&a_{4,4}\\
	\end{matrix}
	&
	\begin{matrix}
	a_{3,5}&a_{3,6}\\
	a_{4,5}&a_{4,6}\\
	\end{matrix}
	\\\hline
	\begin{matrix}
	a_{5,1}&a_{5,2}\\
	a_{6,1}&a_{6,2}\\
	\end{matrix}
	&
	\begin{matrix}
	a_{5,3}&a_{5,4}\\
	a_{6,3}&a_{6,4}\\
	\end{matrix}
	&
	\begin{matrix}
	a_{5,5}&a_{5,6}\\
	a_{6,5}&a_{6,6}\\
	\end{matrix}
	\end{array}}\right]}_{\left[\begin{array}{cccc}
A_{1,1}
&
A_{1,2}
&
A_{1,3}
\\
A_{2,1}
&
A_{2,2}
&
A_{2,3}
\\
A_{3,1}
&
A_{3,2}
&
A_{3,3}
\end{array}
\right]}
\underbrace{\left[{\footnotesize \begin{array}{cc}
	x_{1,1}&x_{1,2}\\
	x_{2,1}&x_{2,2}\\
	\hline
	x_{3,1}&x_{3,2}\\
	x_{4,1}&x_{4,2}\\
	\hline
	x_{5,1}&x_{5,2}\\
	x_{6,1}&x_{6,2}
	\end{array}}\right]}_{\left[\begin{array}{c}
X_1\\
X_2\\
X_3
\end{array}\right]}
= 
\underbrace{\left[{\footnotesize \begin{array}{cc}
	b_{1,1}&b_{1,2}\\
	b_{2,1}&b_{2,2}\\
	\hline
	b_{3,1}&b_{3,2}\\
	b_{4,1}&b_{4,2}\\
	\hline
	b_{5,1}&b_{5,2}\\
	b_{6,1}&b_{6,2}
	\end{array}}\right]}_{\left[\begin{array}{c}
B_1\\
B_2\\
B_3
\end{array}\right]},
\end{equation}
where $A_{i,j}$, $X_{j}$, and $B_{i}$ are all $2\times 2$ matrices.

In this paper, we restrict our analysis to the case when $\dim(\mathcal{K}_n(A,B)) = ns$.

\subsection{Framework}\label{sec:framework}
\sloppy{To generalize the results from standard Krylov subspace methods to block ones, we follow \cite[sec. 2]{Frommer2017Block} and replace $\Ce$ by the non-commutative $^*$-algebra $\Es$ of complex $s\times s$ matrices.}\footnote{The authors in \cite{Frommer2017Block} considered various other subrings of $\Es$ to classify different block-type Krylov subspace methods in a common framework.} We observe that objects and operations over $\Ce$ not relying on commutativity have counterparts in $\Es$ that are relevant for the analysis of block Krylov subspace methods; some of them are shown in \Cref{tab:framework}. We use the notation $\mathbb{S} \simeq \mathbb{C}^{s\times s}$ to say that the elements of $\Es$ are from $\Ce^{s\times s}$ but equipped with additional operations.

We emphasize that to comply with the standard block Krylov subspace method notation, the generalization of positive real numbers denoted by $\Es^+$ corresponds to the upper triangular matrices with positive diagonal entries.\footnote{The standard subalgebra of positive entries is represented by Hermitian positive-definite matrices, whose Cholesky factors have a one-to-one correspondence with elements of $\Es^+$ used here.} This choice becomes particularly important for the $^*$-algebra formulation of the block Arnoldi algorithm, see \Cref{sec:block_Arnoldi}. Note also that in $\Es^n$, similarly to $\Ce^n$, we distinguish between the \emph{block absolute value} $|\cdot|$ and the \emph{block norm} $\no{\cdot}$. Both these operators map to $\Es_0^+$, however the first is defined only for the elements of $\Es$, whereas the second one is defined for an arbitrary vector with elements in $\Es$, see \cref{tab:framework}.
 \begin{table}[!ht]
 	\caption{Entities and operations on $\Ce$ and their counterparts on $\Es$. CholU stands for the upper triangular factor of the Cholesky decomposition.}\label{tab:framework}
	\def\arraystretch{1.5}
	\centering
	\small{\begin{tabular}{ll}
		standard&block\\ \hline \hline
		$\mathbb{C}$ & $\mathbb{S} \simeq \mathbb{C}^{s\times s}$\\
		
		\quad (commutative $^*$-ring) & \quad (noncommutative $^*$-algebra over $\Ce$)\\
		
		$\Er^{+}$ & $\mathbb{S^+}$\ldots upper-$\Delta$ with positive diag. entries\\
		
		\quad (multiplicative topological group)&\quad (multiplicative topological group)\\
		
		$\Er^{+}_0$ & $\mathbb{S}^+_0$\ldots upper-$\Delta$ with nonnegative diag. entries\\
		
		$0$ & singular ${s\times s}$ matrix\\
		
		$1$ & $I\in\Es$\\  \hline
		
		$a,b \in \mathbb{C}$ & $A,B \in \mathbb{S}$\\
		
		$|a| \equiv \sqrt{a^*a}\in\Er_0^+ $ & $|A| = \sqrt{A^*A} \equiv \text{cholU}(A^*A)\in\mathbb{S}_0^+ $\\
				
		$|a|\in \Er^{+} \Longleftrightarrow a\neq 0 $ & $|A|\in \mathbb{S}^{+} \Longleftrightarrow A \text{ nonsingular} $\\
		
		$|a\,|b|| = |a|\,|b|$ &$|A\,|B|| = |A|\,|B|$ \\
		
		\hline
		
		$\bm{x},\bm{y} \in \mathbb{C}^{n}$ & $\bm{X},\bm{Y} \in \mathbb{S}^n\simeq\mathbb{C}^{ns\times s}$\\
		
		$\sxy{\bm{x}}{\bm{y}}\equiv \bm{y}^*\bm{x} \in \mathbb{C}$&$\ssxy{\bm{X}}{\bm{Y}} \equiv \bm{Y}^*\bm{X}\in \mathbb{S}$\\
		
		$\sxy{\bm{x}}{\bm{y}}={\sxy{\bm{y}}{\bm{x}}}^*$&$\ssxy{\bm{X}}{\bm{Y}}=\ssxy{\bm{Y}}{\bm{X}}^*$\\
		
		$\sxy{\bm{x}a}{\bm{y}}=\sxy{\bm{x}}{\bm{y}}a$&$\ssxy{\bm{X}A}{\bm{Y}}=\ssxy{\bm{X}}{\bm{Y}}A$\\
		$\sxy{\bm{x}}{\bm{y}a}=a^*\sxy{\bm{x}}{\bm{y}}$&$\ssxy{\bm{X}}{\bm{Y}A}=A^*\ssxy{\bm{X}}{\bm{Y}}$\\
		
		$\|\bm{x}\| \equiv \sqrt{\sxy{\bm{x}}{\bm{x}}}\in\Er_0^+$ & $\no{\bm{X}} \equiv \sqrt{\ssxy{\bm{X}}{\bm{X}}}\in\mathbb{S}_0^+$ \\
		
		$\|\bm{x}\,|a|\| = \|\bm{x}\|\,|a|$  & $\no{\bm{X}\,|A|} = \no{\bm{X}}\,|A|$ \\
		
		\hline
		
		$I_n\in\Ce^{n\times n}$\ldots identity matrix& $\bcal{I}_n\in\Es^{n\times n}\simeq\mathbb{C}^{ns\times ns}$\ldots block identity matrix\\
		
		$\bm{e}_k$\ldots $k$th column of $I$ & $\bm{E}_k$\ldots$k$th block column of $\bcal{I}$\\
		\hline
					
		$\{\bm{v}_1,\ldots, \bm{v}_n\}$ \ linearly independent&$\{\bm{V}_1,\ldots,\bm{V}_n\}$  \ linearly independent\\
		
		$\|\sum_{i=1}^{n} \bm{v}_ic_i\| = 0 \ \ \Rightarrow \ \  c_i=0\ \forall i$&$\no{\sum_{i=1}^{n} \bm{V}_iC_i} \ \ \text{singular} \ \ \Rightarrow \ \  C_i \ \ \text{singular}\ \forall i
		$\\
		
		\hline
				
		$\{\bm{w}_1,\ldots, \bm{w}_n\}$ \ orthonormal basis of $\Ce^n$ &$\{\bm{W}_1,\ldots, \bm{W}_n\}$ \ orthonormal basis of $\Es^n$\\
		
		$\sxy{\bm{w}_i}{\bm{w}_j} = \delta_{ij}$&$\ssxy{\bm{W}_i}{\bm{W}_j} = \delta_{ij} I$\\
		
		$\bm{x} = \sum_{k=1}^n \bm{w}_k\,\sxy{\bm{x}}{\bm{w}_k}$, \ $\forall \bm{x}\in\Ce^n$ & $\bm{X} = \sum_{k=1}^n \bm{W}_k\,\ssxy{\bm{X}}{\bm{W}_k}$, \ $\forall \bm{X}\in\Es^n$\\
	\end{tabular}}
\end{table}
\begin{notation}\label{notation.caligraphic}
In order to keep our notation clear, we denote matrices
over $\Es$ with caligraphic letters, vectors over $\Es$ by
bold capital letters, and elements of $\Es$ by unbolded capital letters. Vectors over
$\Ce$ are denoted by lower-case bold letters and 
scalar elements of $\Ce$ are 
denoted by lower-case unbolded letters.
\end{notation}

In order to be able to investigate convergence behavior, we generalize the ordering of nonnegative real numbers, denoted by $\Er_0^+$, to that of upper triangular complex matrices with real nonnegative diagonal entries, denoted by $\Es_0^+$, as follows:
\begin{align}\label{eq:gen_loewner}
|A|\prec |B| & \ \Longleftrightarrow \  A^*A\overset{\text{{\tiny Loewner}}}{\prec} B^*B,\ \ \mbox{and}\ \  
|A|\preceq |B| &\ \Longleftrightarrow \ A^*A\overset{\text{{\tiny Loewner}}}{\preceq} B^*B,
\end{align}
where $\overset{\text{{\tiny Loewner}}}{\prec}$ and $\overset{\text{{\tiny Loewner}}}{\preceq}$ is the Loewner (partial) ordering of Hermitian matrices.\footnote{Defined by the relation
	$A\overset{\text{{\tiny Loewner}}}{\prec} B  \iff B-A$ is Hermitian positive definite and $A\overset{\text{{\tiny Loewner}}}{\preceq} B  \iff B-A$ is Hermitian positive semi-definite; see \cite{Loewner1934Uber}.} Most of the results presented in the upcoming sections can be formulated also in terms of the decay of the ``squares'', i.e., the traditional Loewner order. To make the results formally as similar as possible to the classical results for standard GMRES, we have chosen to use the non-increasing order of the norms rather than their squares. 

\subsection{Arnoldi-based methods in the new framework}\label{sec:block_Arnoldi}
Every Krylov subspace method relies on construction of a well-conditioned (ideally orthonormal) basis of the Krylov subspace. For standard GMRES, this orthonormal basis is computed using the Arnoldi algorithm. For block GMRES, where our aim is to compute an orthonormal block basis, see the last part of \Cref{tab:framework}, we use the block Arnoldi algorithm, iteratively producing orthonormal block Arnoldi vectors $\bm{V}_1,\ldots,\bm{V}_k$, $k=1,\ldots,n,$ spanning the Krylov subspace \cref{eq:krylov_subspace}.  If we assume no premature breakdown, it can be carried out for $n$ iterations to produce a basis for the space $\Es^{n}$.   In \Cref{alg:arnoldi}, following \cite{Frommer2017Block}, we provide a pseudocode using the notation introduced in \Cref{tab:framework}.
\begin{algorithm}[t]
	\caption{Block Arnoldi algorithm (run to completion)}
	\label{alg:arnoldi}
	\begin{algorithmic}
		\REQUIRE{$\bcal{A}\in\mathbb{S}^{n\times n}$, $\bm{B}\in\mathbb{S}^{n}$}
		\STATE{$\widetilde{\bm{V}}=\bm{B}$}
		\FOR{$j=1$ to $n$}
		\STATE{$H_{j,j-1} = \no{\widetilde{\bm{V}}}$, \, $\bm{V}_{j} = \widetilde{\bm{V}}H_{j,j-1}^{-1}$}
		\STATE{$\widetilde{\bm{V}} = \bcal{A}\bm{V}_j$}
		\FOR{$i=1$ to $j$}
		\STATE{$H_{i,j} = \ssxy{\widetilde{\bm{V}}}{\bm{V}_i}$}
 		\STATE{$\widetilde{\bm{V}} = \widetilde{\bm{V}}-\bm{V}_iH_{i,j}$}
		\ENDFOR
		\ENDFOR
		\RETURN $\bcal{H}= (H_{i,j})_{i,j=1}^{n}\in\Es^{n\times n}$, $\bcal{V} = \begin{bmatrix}\bm{V}_1&\cdots&\bm{V}_n\end{bmatrix}\in\Es^{n\times n}$
	\end{algorithmic}
\end{algorithm}
There are various strategies for how to handle rank-deficient Arnoldi vectors; see, e.g., \cite{frommer.deflCG.2014,freund.blockqmr.defl.1997,soodhalter.bminres.2015}. Hereafter, we will however assume that there is no breakdown in \Cref{alg:arnoldi}.

Taken to the $n$th iteration, the block Arnoldi algorithm yields the block Arnoldi relation
\begin{equation}\label{eq:block_Arnoldi_mat}
\bcal{A}\bcal{V} = \bcal{V}\bcal{H}, \qquad \bcal{H}\in\Es^{n\times n}\ \mbox{block upper Hessenberg},
\end{equation}
with $\bcal{V}$ having as columns an orthonormal basis of $\Es^{n}$, and the relation \cref{eq:block_Arnoldi_mat} represents a full orthogonal Hessenberg factorization of the matrix $\bcal{A}$. Since the blocks on the subdiagonal of the block Hessenberg matrix $\bcal{H}$ are from $\Es^+$, our definition of $\Es^+$ assures that the matrix $\bcal{H}$ has exactly $s$ subdiagonals and the entries of the last one are positive. Therefore, the block Arnoldi algorithm formulated through the new framework computes the same Arnoldi decomposition as the block Arnoldi algorithm performed in a conventional way; see, e.g., \cite[Section 6.12]{Saad2003Iterative}. Terminating \Cref{alg:arnoldi} after iteration $k$ produces a basis of the Krylov subspace \cref{eq:krylov_subspace}. 

Using the block Arnoldi method (blArnoldi) for eigenvalue problems, the eigenvalues of $\bcal{A}$ are in each step $k$ approximated by the Ritz values, i.e., the $sk$ eigenvalues of the $k$th principal submatrix $\bcal{H}^{(k)}\in\Es^{k\times k}$ of $\bcal{H}$.\footnote{By the eigenvalue of a matrix $\bcal{A}$ we mean the standard eigenvalue, i.e., $\lambda\in\Ce$ satisfying $\bcal{A}\bm{v} = \lambda \bm{v}$ for some nonzero $\bm{v}\in\Ce^{ns}$. We note that considering the linear system over $\Es$ induces a notion of a block eigenvalue decomposition, i.e., there exist $\boldsymbol{\Lambda}\in\Es$ such that $\bcal{A}\bm{V} = \bm{V}\boldsymbol{\Lambda}$ for some $\bm{V}\in\Es^{n}$ with linearly independent columns.  We do not treat such a notion in this paper as it is highly technical, due in part to the non-commutitivity of multiplication in $\Es$.  Furthermore, as we show in \Cref{sec:companion}, the correct notion of block eigenvalues in this setting is that of \emph{block solvents} of specified $\lambda$-matrices.} For systems of linear algebraic equations, the approximate block solutions $\bm{X}_k$ are constructed as a linear combination of the computed orthonormal basis of $\mathcal{K}_k(\bcal{A},\bm{B})$, i.e.,
\begin{equation}
\bm{X}_k = \begin{bmatrix}\bm{V}_1&\cdots&\bm{V}_k\end{bmatrix}\bm{Y}_k,
\end{equation}
where the block vector $\bm{Y}_k\in\Es^k$ is obtained from small projected problems. We analyze two different methods: block FOM (blFOM) and block GMRES (blGMRES). The blFOM method is a Galerkin method, keeping the individual residuals in each step orthogonal to all previous, i.e.,
\begin{equation}\label{eq:FOM}
\bm{Y}_k^F = \left(\bcal{H}^{(k)}\right)^{-1} \bm{E}_1\no{\bm{R}_0}. 
\end{equation}
The blGMRES method minimizes the Euclidean norm of each of the individual residuals\footnote{The application of the Moore-Penrose pseudoinverse to each column individually gives the least-squares solution for each right-hand side, and applying it to each column individually is equivalent to applying it to the block right-hand side. Further, minimizing the Euclidean norm of individual residuals is also equivalent to minimizing the Frobenius norm of the block residual. This was utilized, for example, in the analysis of blGMRES in \cite{Simoncini1996Convergence}.}, i.e.,
\begin{equation}\label{eq:blGMRES}
\bm{Y}^G_k=\left(\underline{\bcal{H}}^{(k)}\right)^\dagger\bm{E}_1\no{\bm{R}_0}, 
\end{equation}
where $\underline{\bcal{H}}^{(k)}\in\Es^{(k+1)\times k}$ is the upper-left $(k+1)\times k$ block of $\bcal{H}$ and $\cdot^\dagger$ denotes here and hereafter the Moore-Penrose pseudoinverse.

While the blGMRES solution always exists, the blFOM solution is not defined when the matrix $\bcal{H}^{(k)}$ is singular.  In this case, we will consider the generalized blFOM solution and define
\begin{equation}\label{eq:gen_FOM}
\bm{Y}_k^F = \operatorname{arg\,min}_{\bm{Y}} \left\{\|\bm{E_k}^T\bm{Y}\|_F \, ; \ \bm{Y} = \left(\bcal{H}^{(k)}\right)^- \bm{E}_1\no{\bm{R}_0} \right\},
\end{equation}
where $\cdot^-$ denotes a generalized inverse; see also \cite{Soodhalter2017Stagnation}.
Since the true and the generalized FOM solution coincide for nonsingular $\bcal{H}^{(k)}$, we denote both by $\bm{X}_k^F$ to simplify the notation.

\section{Admissible convergence behavior of blGMRES}\label{sec:admissible_conv}
In this section, we provide a block definition of admissible convergence behavior of the norm-minimizing method by generalizing some of the well-known relations between the residuals of the norm-minimizing and the Galerkin (residual orthogonalizing) method.

\subsection{Block Givens transformation}\label{sec:Givens}
Each step of standard GMRES requires computation of one new elementary rotation to eliminate the $k$th subdiagonal entry of $\bcal{H}$. These rotations facilitate analysis of GMRES convergence and its relationship to FOM \cite[sec. 6.5.7]{Saad2003Iterative}. For blGMRES, a similar analysis is available using the product of Householder transformations \cite{Soodhalter2017Stagnation}, but a block analog to Givens rotations provides additional and clearer results. 

For the elimination in blGMRES, $s^2$ standard elementary Givens rotations are needed \cite[Section 6.12]{Saad2003Iterative}. A product of (elementary) Givens rotations is an orthogonal transformation but loses the properties of rotation, except for the very special case when all the individual rotations commute; see \cite{Bjoerck1996Numerical,Golub2013Matrix,Tebbens2011Analyza}. Therefore, the \emph{block Givens transformation} will not be a true generalization of a Givens rotation, but rather a block representation of the product of the $s^2$ standard elementary Givens rotations.\footnote{We emphasize that here we are concerned with mathematical properties of such block Givens transformations. In practical computations, the individual Givens sines and cosines are stored, and the product is rarely computed explicitly.}\,\footnote{When $s$ is large, the subdiagonal entries of the block Hessenberg matrix are eliminated using Householder reflections. There are two main possible generalizations of Householder reflections to the block case. The first, see \cite{Schreiber1988Block}, preserves the properties of a reflection, but is only able to eliminate the subdiagonal block. The second one, see \cite{Schreiber1989storage}, is able to eliminate all subdiagonal entries and is a block representation of a product of $s$ standard elementary Householder reflections, but does not have properties of a reflection. In any case, they are of little use when generalizing relations from the standard case, where Givens rotations always are performed.}

We follow the idea of \cite{HalleckBlock}. Assume first a general orthogonal transformation $\bcal{Q}$ that eliminates an entry of a block vector, i.e.,
\begin{equation}
\bm{V} = \begin{bmatrix}V_{1}\\V_{2}\end{bmatrix}\longrightarrow \bcal{Q}\bm{V}=\begin{bmatrix}\widetilde{V}\\0\end{bmatrix}, \label{eq:elimination}
\end{equation}
where $V_1, V_2\in\Es$ and $|\widetilde{V}| = \no{\bm{V}}$. 
If $V_2$ is invertible, then the unitary matrix $\bcal{Q}$ eliminating the block $V_2$ can be constructed as
\begin{equation}
\bcal{Q} = \begin{bmatrix}
\bar{C}&\bar{S}\\
-S&C\\
\end{bmatrix} 
:=
\begin{bmatrix}
XZ^*&X\\
-Y&YZ
\end{bmatrix}, \label{eq:givens_mat}
\end{equation}
where 
\begin{align}
Z = V_1V_2^{-1},\qquad
X^*X =(I+Z^*Z)^{-1},\qquad \mbox{and}\qquad 
Y^*Y =(I+ZZ^*)^{-1},
\end{align}
which can be verified by simple computation.

Note that there is freedom in the choice of $X$ and $Y$, since they can be arbitrary right factors of the matrices $(I+Z^*Z)^{-1}$ and $(I+ZZ^*)^{-1}$, respectively. This allows us also to control the non-zero pattern of the matrix $\bcal{Q}$. If we take
\begin{align}\label{eq:givens_choice}
X = \left(\sqrt{(I+Z^*Z)}\right)^{-*}\qquad \mbox{and}\qquad
Y = \sqrt{(I+ZZ^*)^{-1}}, 
\end{align}
then, due to our definition of $\sqrt{\,\cdot\,}$\,, the matrix $\bcal{Q}$ in \cref{eq:givens_mat} becomes $(2s+1)$-diagonal. In this case, we can see $\bcal{Q}$ as a product of individual standard Givens rotations applied to $\bm{V}$ to eliminate the subdiagonal entries. If $V_1,V_2\in\Ce$, the choice \cref{eq:givens_choice} will lead to the standard elementary Givens rotation with $\bar{S}$ and $\bar{C}$ being the complex conjugates of $C$ and $S$, respectively. This is however generally not the case for $V_1,V_2\in\Es$, i.e., $\bar{C}\neq C^*$ and $\bar{S}\neq S^*$. The following proposition explains why the choice \cref{eq:givens_choice} is important for the solution of \cref{eq:blGMRES} by factorizing the block upper Hessenberg matrix $\bcal{H}$ from \cref{eq:block_Arnoldi_mat}.
\begin{proposition}\label{th:givens}
	Let $\bcal{H}\in\Es^{n\times n}$ be an upper block Hessenberg matrix with blocks $H_{i,j}$, and let further the subdiagonal blocks be upper triangular with positive diagonal entries, i.e., $H_{k+1,k}\in\Es^+$, $k = 1,\ldots, n-1$. Define, for $k = 1,\ldots,n-1$, the block Givens transformations $\bcal{G}^{(k)}$ as
	\begin{equation}
	\bcal{G}^{(k)} = \begin{bmatrix}\bcal{I}_{k-1}\\&\bar{C}_k&\bar{S}_k\\&-S_k&C_k\\&&&\bcal{I}_{n-k-1}\end{bmatrix}, \label{eq:givens_big}
	\end{equation} 
	where
	\begin{equation}
	\begin{bmatrix}
	\bar{C_k}&\bar{S_k}\\-S_k&C_k\\
	\end{bmatrix} =
	\begin{bmatrix}
	\sqrt{(I+Z_k^*Z_k)}^{-*}Z_k^*&\sqrt{(I+Z_k^*Z_k)}^{-*}\\
	-\sqrt{(I+Z_kZ_k^*)^{-1}}&\sqrt{(I+Z_kZ_k^*)^{-1}}Z_k
	\end{bmatrix}, \quad Z_k = H_{k,k}^{(k-1)}H_{k+1,k}^{-1},
	\end{equation}
	with $\bcal{H}^{(0)} = \bcal{H}$ and $\bcal{H}^{(k)} = \bcal{G}^{(k)}\bcal{H}^{(k-1)}$. Define further the unitary matrix $\bcal{Q}$ as
	\begin{equation}
	\bcal{Q} = \bcal{G}^{(n-1)}\, \bcal{G}^{(n-2)}\cdots \bcal{G}^{(1)}.
	\end{equation}
	Then $\bcal{QH}$ is a block upper triangular matrix, with the first $n-1$ block diagonal entries in $\Es^+$.
\end{proposition}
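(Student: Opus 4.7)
The plan is an induction on $k$ establishing three properties at each step: (a) the block Givens factor $\bcal{G}^{(k)}$ is unitary; (b) applying $\bcal{G}^{(k)}$ to $\bcal{H}^{(k-1)}$ zeros the block $H^{(k-1)}_{k+1,k}$ while preserving the zeros created in earlier steps; and (c) the new diagonal block $H^{(k)}_{k,k}$ lies in $\Es^+$, so the next step is well-defined. Property (a) combined with (b), iterated for $k=1,\ldots,n-1$, yields $\bcal{Q}\bcal{H}$ block upper triangular, and property (c) delivers the claim on the first $n-1$ diagonal blocks.

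For (a) I would verify unitarity of the $2\times 2$ block in \cref{eq:givens_big} directly, exploiting the operator identity $Z(I+Z^*Z)^{-1} = (I+ZZ^*)^{-1}Z$ together with $X^*X = (I+Z^*Z)^{-1}$ and $Y^*Y = (I+ZZ^*)^{-1}$ from \cref{eq:givens_choice}; the four $2\times 2$ block computations all reduce to this single identity. Because $\bcal{G}^{(k)}$ is the identity on all other block rows, it is unitary, and so is the product $\bcal{Q}$. Property (b) is immediate: $\bcal{G}^{(k)}$ acts nontrivially only on block rows $k$ and $k+1$, so the zero blocks below the block diagonal in columns $1,\ldots,k-1$ are preserved, and the subdiagonal block $H^{(k-1)}_{k+2,k+1} = H_{k+2,k+1}\in\Es^+$ is untouched and available at step $k+1$. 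Setting $V_1 := H^{(k-1)}_{k,k}$ and $V_2 := H^{(k-1)}_{k+1,k} = H_{k+1,k}\in\Es^+$, the bottom row of the $2\times 2$ action evaluates to $Y_k(-Z_kV_2 + Z_kV_2)=0$, producing the desired elimination.

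The main obstacle is property (c). Here one must show that the top-row output $H^{(k)}_{k,k} = X_k(Z_k^*V_1 + V_2)$ is not merely some nonsingular block of absolute value $\no{[V_1;V_2]}$, but specifically the upper-triangular Cholesky factor $R := \text{cholU}(V_1^*V_1 + V_2^*V_2)\in\Es^+$. The reason that the particular choice of square roots in \cref{eq:givens_choice} enforces this is the following identity. Using $V_2\in\Es^+$, one rewrites
\begin{equation*}
I + Z_k^*Z_k \;=\; V_2^{-*}(V_1^*V_1 + V_2^*V_2)V_2^{-1} \;=\; (RV_2^{-1})^*(RV_2^{-1}),
\end{equation*}
and since $RV_2^{-1}\in\Es^+$, uniqueness of the upper-triangular Cholesky factor forces $\sqrt{I+Z_k^*Z_k} = RV_2^{-1}$, hence $X_k = R^{-*}V_2^*$. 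A parallel rewriting yields $Z_k^*V_1 + V_2 = V_2^{-*}R^*R$, and the two combine to $H^{(k)}_{k,k} = R^{-*}V_2^*V_2^{-*}R^*R = R \in \Es^+$. This closes the induction and, upon summing over $k=1,\ldots,n-1$, proves the proposition; one should also remark that the last diagonal block $H^{(n-1)}_{n,n}$ is generically not in $\Es^+$, which is why the statement restricts to the first $n-1$.
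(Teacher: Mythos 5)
Your proposal is correct, and its overall skeleton (induction over $k$; unitarity, elimination, and membership of the new diagonal block in $\Es^+$) matches the paper's. The interesting difference is in the key step (c). The paper avoids computing the top-row output altogether: it uses unitarity to invert the $2\times 2$ block relation, reads off $H_{k+1,k} = \bar{S}_k^*\Xi$, hence $\Xi = \bar{S}_k^{-*}H_{k+1,k}$, and concludes $\Xi\in\Es^+$ because $\bar{S}_k^{-*} = \sqrt{I+Z_k^*Z_k}$ is in $\Es^+$ by the cholU definition of $\sqrt{\,\cdot\,}$ and $\Es^+$ is a multiplicative group. You instead evaluate the top row directly and identify it, via uniqueness of the upper-triangular Cholesky factor applied to $I+Z_k^*Z_k = (RV_2^{-1})^*(RV_2^{-1})$, with $R = \text{cholU}(V_1^*V_1+V_2^*V_2) = \no{[V_1;V_2]}$. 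The two answers agree, since $\bar{S}_k^{-*}H_{k+1,k} = (RV_2^{-1})V_2 = R$. What the paper's route buys is brevity and independence from the particular factorization of $X_k$; what your route buys is the explicit identity $H^{(k)}_{k,k} = \no{[V_1;V_2]}$, i.e., the diagonal block is exactly the block norm of the eliminated column segment, which sharpens the statement $|\widetilde{V}| = \no{\bm{V}}$ of \cref{eq:elimination} to an equality without the absolute value for this choice of square roots. Two small cosmetic points: in the bottom-row computation the expression should read $Y_k(-V_1 + Z_kV_2)$, which vanishes because $Z_kV_2 = V_1$ (your $-Z_kV_2+Z_kV_2$ presupposes the substitution); and for the $\bcal{Q}^*\bcal{Q}$ blocks you do need the push-through identity $Z(I+Z^*Z)^{-1}=(I+ZZ^*)^{-1}Z$ as you say, while the $\bcal{Q}\bcal{Q}^*$ blocks follow from $X^*X=(I+Z^*Z)^{-1}$ and $Y^*Y=(I+ZZ^*)^{-1}$ alone.
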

\begin{proof} 
	The unitarity of $\bcal{Q}$ and the upper block triangular form of $\bcal{QH}$  follows directly from the construction and discussion above. Further, observe that
	\begin{align}
	\begin{bmatrix}
	\bar{C_k}&\bar{S_k}\\-S_k&C_k\\
	\end{bmatrix}
	\begin{bmatrix}H_{k,k}^{(k-1)}\\H_{k+1,j}\end{bmatrix}
	=\begin{bmatrix}\Xi\\0\end{bmatrix} &\Longrightarrow \begin{bmatrix}H_{k,k}^{(k-1)}\\H_{k+1,k}\end{bmatrix}
	=\begin{bmatrix}
	\bar{C}_k^*&-S_k^*\\\bar{S}_k^*&C_k^*\\
	\end{bmatrix}\begin{bmatrix}\Xi\\0\end{bmatrix}\\
	&\Longrightarrow H_{k+1,k} = \bar{S}_k^*\Xi\\
	&\Longrightarrow \Xi = \bar{S}_k^{-*}H_{k+1,k}.
	\end{align}
	Since $\Es^+$ is a multiplicative group and both $\bar{S}_k^{-*}\in\Es^+$ and $H_{k+1,k}\in\Es^+$, also $\Xi\in\Es^+$, which yields the desired statement.
\end{proof}

\Cref{th:givens} shows that the $n-1$ block Givens transformations $\bcal{G}_1,\ldots,\bcal{G}_{n-1}$ defined in \cref{eq:givens_big}, with additional normalization of the last block entry of $\bcal{H}^{(n-1)}$, enable reduction of the Hessenberg matrix $\bcal{H}$ to the upper triangular form. 

\subsection{Residual norms and the peak-plateau relation}\label{sec:peak_plateau}
In this section, we use the block Givens transformations to generalize some of the well-know results about GMRES and FOM residuals. Note that for the generalized blFOM solution \cref{eq:gen_FOM}, the analysis only addresses the component of the true residual that satisfies the desired Galerkin condition, i.e., $\bm{R}_k^F :=\bm{V}_{k+1}\bm{V}_{k+1}^*\left(\bm{B} - \bcal{A}\bm{X}_k^F\right).$ Since the blGMRES solution always exists, it is defined in the conventional way as $\bm{R}_k^G :=\bm{B} - \bcal{A}\bm{X}_k^G$.

Let $\bm{R}_0= \bm{V}_1\no{\bm{R}_0}$ be the initial residual. Note that since we assume no premature breakdown in \Cref{alg:arnoldi}, it holds that $\no{\bm{R}^{G}_{k}}$ is invertible for $k=0,\ldots,n-1$. We follow \cite[sec. 6.5.7]{Saad2003Iterative}. Since the $k$th Givens rotation modifies the right-hand side of the projected problem as 
\begin{equation}
\begin{bmatrix}
\bar{C}_k&\bar{S}_k\\
-S_k&C_k\\
\end{bmatrix}\begin{bmatrix}\no{\bm{R}^{G}_{k-1}}\\0\end{bmatrix}=\begin{bmatrix}*\\-S_k\no{\bm{R}_{k-1}^{G}}\end{bmatrix}
\end{equation}
and since $S_k\in\Es^+$, we get that the blGMRES residual satisfies
\begin{equation}
\no{\bm{R}^{G}_k} = S_k\no{\bm{R}^{G}_{k-1}}\,, \qquad\mbox{which implies}\qquad \no{\bm{R}^{G}_k} = S_k\,\cdots S_1\no{\bm{R}_0}.
\end{equation}
Using \cref{th:givens}, we obtain that the (generalized) blFOM  residual satisfies
\begin{align}\label{eq:fom_res}
\no{\bm{R}^{F}_k} &= |H_{k+1,k}\bm{E}_k^T\bm{Y}_k^F|\\
&= \left\vert H_{k+1,k}\left(H^{(k-1)}_{k,k}\right)^\dagger\right\vert\,\no{\bm{R}^{G}_{k-1}}\\
&= |C_k^\dagger S_k|\,\no{\bm{R}^{G}_{k-1}}= |C_k^\dagger|\,S_k\,\no{\bm{R}^{G}_{k-1}} =|C_k^\dagger|\,\no{\bm{R}^{G}_k},
\end{align}
where $C_k^\dagger$ becomes $C_k^{-1}$, whenever the $k$th principal submatrix of $\bcal{H}$, and therefore also $H^{(k-1)}_{k,k}$, is invertible; cf. \cref{eq:gen_FOM}.

Since blGMRES and blFOM form a norm-minimizing/Galerkin pair, one expects that they satisfy some form of \emph{peak-plateau relation}, see, e.g., \cite[sec. 6.5.7]{Saad2003Iterative} or \cite{Cullum1996Relations}. The following proposition shows that this is indeed the case.

\begin{proposition}
	The residuals of blGMRES and blFOM satisfy
	\begin{equation}\label{eq:peak_plateau}
		\ssxy{\bm{R}_k^F}{\bm{R}_k^F}^\dagger = \ssxy{\bm{R}_k^G}{\bm{R}_k^G}^{-1}-\ssxy{\bm{R}_{k-1}^G}{\bm{R}_{k-1}^G}^{-1}
	\end{equation}
	and 
	\begin{equation}
	\ssxy{\bm{R}_k^G}{\bm{R}_k^G}^{-1} = \sum_{i=0}^k\ssxy{\bm{R}_i^F}{\bm{R}_i^F}^\dagger.
	\end{equation}
\end{proposition}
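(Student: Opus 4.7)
The plan is to reduce both identities to purely algebraic manipulations in the Givens quantities $S_k$, $C_k$, $Z_k = H_{k,k}^{(k-1)}H_{k+1,k}^{-1}$, using the two recurrences established in \Cref{sec:peak_plateau}, namely $\no{\bm{R}_k^G} = S_k\no{\bm{R}_{k-1}^G}$ and $\no{\bm{R}_k^F} = |C_k^\dagger|\no{\bm{R}_k^G}$. The two ``bridges'' between these quantities that I will invoke repeatedly are the factorization $C_k = S_k Z_k$ (since $S_k = Y_k$ and $C_k = Y_k Z_k$ from the construction of the block Givens transformation) and the identity
\begin{equation*}
S_k^* S_k \;=\; (I + Z_k Z_k^*)^{-1},
\end{equation*}
which is immediate from $S_k = Y_k = \sqrt{(I+Z_kZ_k^*)^{-1}}$. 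Throughout I use that $\no{\bm{R}_i^G}$ is invertible for $i=0,\dots,n-1$ under the no-breakdown assumption, and that $S_k$ is invertible since it is upper triangular with positive diagonal.

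For the peak-plateau identity, the right-hand side reduces cleanly via $\ssxy{\bm{R}_i^G}{\bm{R}_i^G} = \no{\bm{R}_i^G}^*\no{\bm{R}_i^G}$ and $(S_k^*S_k)^{-1} = I + Z_kZ_k^*$, giving
\begin{equation*}
\ssxy{\bm{R}_k^G}{\bm{R}_k^G}^{-1} - \ssxy{\bm{R}_{k-1}^G}{\bm{R}_{k-1}^G}^{-1} = \no{\bm{R}_{k-1}^G}^{-1}\bigl[(S_k^*S_k)^{-1} - I\bigr]\no{\bm{R}_{k-1}^G}^{-*} = \no{\bm{R}_{k-1}^G}^{-1} Z_k Z_k^* \no{\bm{R}_{k-1}^G}^{-*}.
\end{equation*}
For the left-hand side I compute $\ssxy{\bm{R}_k^F}{\bm{R}_k^F} = \no{\bm{R}_k^G}^* (C_k^\dagger)^* C_k^\dagger \no{\bm{R}_k^G} = \no{\bm{R}_k^G}^* (C_k C_k^*)^\dagger \no{\bm{R}_k^G}$, using the standard Moore-Penrose identity $(C_k^\dagger)^* C_k^\dagger = (C_kC_k^*)^\dagger$; since $\no{\bm{R}_k^G}$ is invertible, pseudoinversion commutes with the sandwich and yields $\ssxy{\bm{R}_k^F}{\bm{R}_k^F}^\dagger = \no{\bm{R}_k^G}^{-1} (C_k C_k^*) \no{\bm{R}_k^G}^{-*}$. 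Substituting $C_k = S_kZ_k$ and $\no{\bm{R}_k^G}^{-1} = \no{\bm{R}_{k-1}^G}^{-1}S_k^{-1}$ makes all the $S_k$ factors cancel, and the two sides agree.

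The second identity then follows by telescoping the first. Using that $\bm{R}_0^F = \bm{R}_0^G = \bm{R}_0$ (no iterations taken), so $\ssxy{\bm{R}_0^F}{\bm{R}_0^F}^\dagger = \ssxy{\bm{R}_0^G}{\bm{R}_0^G}^{-1}$, I would write
\begin{equation*}
\sum_{i=0}^{k} \ssxy{\bm{R}_i^F}{\bm{R}_i^F}^\dagger = \ssxy{\bm{R}_0^G}{\bm{R}_0^G}^{-1} + \sum_{i=1}^{k}\Bigl(\ssxy{\bm{R}_i^G}{\bm{R}_i^G}^{-1} - \ssxy{\bm{R}_{i-1}^G}{\bm{R}_{i-1}^G}^{-1}\Bigr) = \ssxy{\bm{R}_k^G}{\bm{R}_k^G}^{-1}.
\end{equation*}

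The main delicate point is the pseudoinverse bookkeeping in the case when $C_k$ is singular---precisely the situation that forced the generalized blFOM definition \cref{eq:gen_FOM}. One must justify three facts: $(C_k^\dagger)^* C_k^\dagger = (C_k C_k^*)^\dagger$, the sandwich rule $(P^* M P)^\dagger = P^{-1} M^\dagger P^{-*}$ for invertible $P$, and the involution $(M^\dagger)^\dagger = M$ (applied with $M = C_kC_k^*$). All three are textbook properties of the Moore-Penrose pseudoinverse, so once they are invoked in the correct order the rank-deficient case offers no genuine obstruction; the rest is symbolic rearrangement.
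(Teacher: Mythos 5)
Your proposal is correct and follows essentially the same route as the paper: both arguments start from $\no{\bm{R}_k^F} = |C_k^\dagger|\,\no{\bm{R}_k^G}$, reduce the claim to an algebraic identity in the block Givens quantities, pseudo-invert the resulting congruence, and telescope; you merely verify the middle identity via $C_k = S_kZ_k$ and $S_k^*S_k = (I+Z_kZ_k^*)^{-1}$, where the paper uses the equivalent unitarity relation $C_kC_k^* = I - S_kS_k^*$ together with $S_k = \no{\bm{R}_k^G}\,\no{\bm{R}_{k-1}^G}^{-1}$. One caveat: the ``sandwich rule'' $(P^*MP)^\dagger = P^{-1}M^\dagger P^{-*}$ for invertible $P$ is \emph{not} a general textbook identity (it requires, e.g., that $PP^*$ commute with the orthogonal projector onto $\operatorname{Range}(M)$), but the paper's own proof invokes it in exactly the same unargued way in the rank-deficient case, so this is a shared delicacy rather than a gap specific to your argument.
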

\begin{proof}	
Using \cref{eq:fom_res}, we have
\begin{align}\label{eq:peak_plateau_prep}
\ssxy{\bm{R}_k^F}{\bm{R}_k^F}&=\no{\bm{R}_k^G}^*\,C_k^{\dagger*}\,C_k^{\dagger}\,\no{\bm{R}_k^G}=\no{\bm{R}_k^G}^*\left(I-S_kS_k^*\right)^\dagger\no{\bm{R}_k^G} \\
&=\no{\bm{R}_k^G}^*\left(I-\no{\bm{R}^{G}_k}\,\no{\bm{R}^{G}_{k-1}}^{-1}\no{\bm{R}^{G}_{k-1}}^{-*}\no{\bm{R}^{G}_k}^*\right)^\dagger\no{\bm{R}_k^G}. 
\end{align}
By taking the pseudoinverse, we obtain
\begin{align}
\ssxy{\bm{R}_k^F}{\bm{R}_k^F}^\dagger &= \no{\bm{R}_k^G}^{-1}\left(I-\no{\bm{R}^{G}_k}\,\no{\bm{R}^{G}_{k-1}}^{-1}\no{\bm{R}^{G}_{k-1}}^{-*}\no{\bm{R}^{G}_k}^*\right)\no{\bm{R}_k^G}^{-*}\\
&= \no{\bm{R}_k^G}^{-1}\no{\bm{R}_k^G}^{-*}-\no{\bm{R}_{k-1}^G}^{-1}\no{\bm{R}_{k-1}^G}^{-*}\\
&= \ssxy{\bm{R}_k^G}{\bm{R}_k^G}^{-1}-\ssxy{\bm{R}_{k-1}^G}{\bm{R}_{k-1}^G}^{-1}, 
\end{align}
which gives \cref{eq:peak_plateau}. Applying relation \cref{eq:peak_plateau} recursively gives the second statement.
\end{proof}
Relation \cref{eq:peak_plateau} can be viewed as a generalization of the peak-plateau relation, to which it reduces when $s=1$. 
For other relations between blGMRES and blFOM, see also \cite{Soodhalter2017Stagnation}.

Also note that $\ssxy{\bm{R}_k^F}{\bm{R}_k^F}$ is singular if and only if $\no{\bm{R}_k^F}$ is singular, i.e., if either the standard FOM iterate does not exist for some of the right-hand sides, or if the individual FOM residuals are linearly dependent. This corresponds to the situation when the residual update in blGMRES is of rank smaller than $s$, see \cite[p. 173]{Soodhalter2017Stagnation}. Such situation will be of interest also in \Cref{sec:Arnoldi_vs_GMRES}.

\subsection{Admissible convergence behavior of blGMRES}
The results of the previous section have some nontrivial consequences for the convergence behavior of blGMRES.  
\begin{theorem}\label{th:admissible_conv}
	The blGMRES residuals satisfy
	\begin{equation}\label{eq:admissible}
	\no{\bm{R}_0}\succeq \no{\bm{R}^G_1} \succeq \cdots \succeq \no{\bm{R}_{n-1}^G}\succ 0.
	\end{equation}
\end{theorem}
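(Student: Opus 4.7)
The plan is to translate~\cref{eq:admissible} into the ordinary Loewner ordering on $\Ce^{s\times s}$ via the definition~\cref{eq:gen_loewner}, and then exploit the recurrence $\no{\bm{R}^G_k} = S_k\no{\bm{R}^G_{k-1}}$ derived just above the peak--plateau proposition from the action of the $k$th block Givens transformation on $[\,\no{\bm{R}^G_{k-1}};\,0\,]$. Multiplying this identity on the left by its conjugate transpose yields
\begin{equation*}
\ssxy{\bm{R}^G_k}{\bm{R}^G_k} \;=\; \no{\bm{R}^G_{k-1}}^{*}\,(S_k^{*}S_k)\,\no{\bm{R}^G_{k-1}},
\end{equation*}
so by~\cref{eq:gen_loewner} the monotonicity $\no{\bm{R}^G_k}\preceq\no{\bm{R}^G_{k-1}}$ reduces to checking that $\no{\bm{R}^G_{k-1}}^{*}(I-S_k^{*}S_k)\no{\bm{R}^G_{k-1}}$ is Loewner positive semidefinite.

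The central observation is that $S_k^{*}S_k\preceq I$ in the ordinary Loewner order. This reads off immediately from the unitarity of the block Givens matrix~\cref{eq:givens_big}: the $(1,1)$ block of $\bcal{G}^{(k)*}\bcal{G}^{(k)}=\bcal{I}$ gives $\bar{C}_k^{*}\bar{C}_k+S_k^{*}S_k=I$, whence $I-S_k^{*}S_k=\bar{C}_k^{*}\bar{C}_k$ is Hermitian positive semidefinite. Conjugating by $\no{\bm{R}^G_{k-1}}$ preserves this, which delivers $\no{\bm{R}^G_{k-1}}\succeq\no{\bm{R}^G_k}$ for every $k=1,\ldots,n-1$ and hence the chain in~\cref{eq:admissible} up to the strict final inequality. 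For the latter, I would invoke the standing no-breakdown assumption on \Cref{alg:arnoldi}, which the paper has already observed forces $\no{\bm{R}^G_k}$ to be invertible for $k=0,\ldots,n-1$; invertibility of $\no{\bm{R}^G_{n-1}}$ is by definition positive definiteness of $\ssxy{\bm{R}^G_{n-1}}{\bm{R}^G_{n-1}}$, which via~\cref{eq:gen_loewner} is precisely $\no{\bm{R}^G_{n-1}}\succ 0$.

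I do not anticipate a real obstacle: the whole argument is the block analog of the scalar fact that the sine in a Givens rotation satisfies $|s|\leq 1$, upgraded from a single inequality to the Loewner relation $S_k^{*}S_k\preceq I$ for the $(2,1)$-block of a unitary $2\times 2$ block matrix. The only point that needs a moment's care is that the ``squaring'' in~\cref{eq:gen_loewner} is compatible with the block inner product, i.e., that $\no{\cdot}^{*}\no{\cdot}=\ssxy{\cdot}{\cdot}$; this is immediate from the Cholesky conventions for $|\cdot|$ and $\no{\cdot}$ recorded in \Cref{tab:framework}, so no additional machinery is required.
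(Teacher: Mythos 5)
Your proof is correct, but it takes a different route from the paper's. The paper deduces \cref{eq:admissible} as an immediate corollary of the peak--plateau identity \cref{eq:peak_plateau}: since $\ssxy{\bm{R}_k^F}{\bm{R}_k^F}^\dagger$ is positive semidefinite, the difference $\ssxy{\bm{R}_k^G}{\bm{R}_k^G}^{-1}-\ssxy{\bm{R}_{k-1}^G}{\bm{R}_{k-1}^G}^{-1}$ is Loewner nonnegative, and passing back from the inverses (implicitly using the operator-antitonicity of $M\mapsto M^{-1}$ on positive definite matrices) gives $\ssxy{\bm{R}_k^G}{\bm{R}_k^G}\overset{\text{{\tiny Loewner}}}{\preceq}\ssxy{\bm{R}_{k-1}^G}{\bm{R}_{k-1}^G}$. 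You instead bypass blFOM and the peak--plateau relation entirely: starting from the recurrence $\no{\bm{R}^G_k}=S_k\no{\bm{R}^G_{k-1}}$, you read off $I-S_k^*S_k=\bar{C}_k^*\bar{C}_k\succeq 0$ from the $(1,1)$ block of $\bcal{G}^{(k)*}\bcal{G}^{(k)}=\bcal{I}$ and conjugate by $\no{\bm{R}^G_{k-1}}$, which is the clean block analog of $|s_k|\le 1$. Both arguments ultimately rest on the unitarity of the block Givens transformation, but yours is more elementary (no pseudoinverses, no inversion-antitonicity), while the paper's yields strictly more information: it identifies the \emph{size} of the residual decrease with the blFOM residual via \cref{eq:peak_plateau}, a quantitative refinement that is reused later when deriving the range-consistency condition \cref{eq:range_cond_final}. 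Your handling of the strict final inequality via the no-breakdown assumption (invertibility of $\no{\bm{R}^G_{n-1}}$, hence positive definiteness of $\ssxy{\bm{R}^G_{n-1}}{\bm{R}^G_{n-1}}$) matches what the theorem requires and is correct.
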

\begin{proof}
	Since $\ssxy{\bm{R}_k^F}{\bm{R}_k^F}$ is a positive semidefinite matrix, relation~\cref{eq:peak_plateau} directly implies that for $k=1,\ldots,n$
	\begin{equation}
	\ssxy{\bm{R}_k^G}{\bm{R}_k^G} \overset{\text{{\tiny Loewner}}}{\preceq} \ssxy{\bm{R}_{k-1}^G}{\bm{R}_{k-1}^G}. 
	\end{equation}
	Using the generalization of the Loewner ordering from \Cref{sec:preliminaries} we can write that
	\begin{equation}\label{eq:gmres_residual_decay}
	\no{\bm{R}_k^G} \preceq \no{\bm{R}_{k-1}^G}. 
	\end{equation}
	Applying this relation recursively, we obtain \cref{eq:admissible}.
\end{proof}

We call the sequence $\no{\bm{R}_0},\no{\bm{R}_1},\ldots,\no{\bm{R}_{n-1}}$ of elements in $\Es^+$ an \emph{admissible convergence behavior} of blGMRES, if it satisfies \cref{eq:admissible}. 

\begin{remark}
	Using $\|\bm{R}_k^Ge_j\|^2=\sxy{\bm{R}_k^Ge_j}{\bm{R}_k^Ge_j} = e_j^T\ssxy{\bm{R}_k^G}{\bm{R}_k^G}e_j$, $j= 1,\ldots,s$, relation \cref{eq:admissible} trivially implies monotonic convergence of the size of the individual residuals. However, relation \cref{eq:admissible}  is generally stronger, because it takes into account the inter-residual relationships, expressed by the off-diagonal entries of the matrices $\ssxy{\bm{R}_k^G}{\bm{R}_k^G}$, $k=0,\ldots,n-1$. If two individual initial residuals (of the same norm) are almost linearly dependent, one cannot expect radically different convergence behaviors for each right-hand side. We will demonstrate this with the following example. Let the initial residuals be almost linearly dependent:
	\begin{equation}
	\ssxy{\bm{R}_0^G}{\bm{R}_0^G} := \begin{bmatrix}1 & 1-\varepsilon\\1-\varepsilon & 1\end{bmatrix}, \quad \varepsilon = 0.01,
	\end{equation}
	and let the size of the first residual be decreased to $\sqrt{\varepsilon}$ and of the second one to $\sqrt{1-\varepsilon}$:
	\begin{equation}
	\ssxy{\bm{R}_1^G}{\bm{R}_1^G} := \begin{bmatrix}\varepsilon & p\\p & 1-\varepsilon\end{bmatrix}, \quad p \text{ unknown}.
	\end{equation}
	Then there is no $p$ such that $	\ssxy{\bm{R}_0^G}{\bm{R}_0^G}\overset{\text{{\tiny Loewner}}}{\succeq} \ssxy{\bm{R}_1^G}{\bm{R}_1^G} \overset{\text{{\tiny Loewner}}}{\succ} 0$,
	and therefore such convergence behavior cannot be exhibited by blGMRES. Conversely, if two initial residuals are orthogonal, \cref{eq:gmres_residual_decay} does not give any further restriction on the convergence curve of the individual residuals.
\end{remark}

\begin{remark}Note that the result of \cref{th:admissible_conv} is also very intuitive in the following sense: Any non-increasing sequence (in the Loewner sense) can be generated through the norms of the orthogonal projections of a (block) vector onto a sequence of embedded subspaces. And vice versa, norms of the orthogonal projections of a (block) vector to any sequence of embedded subspaces will generate a non-increasing sequence (in the Loewner sense). Since orthogonal projections of the residuals to the residual Krylov subspace are the very essence of the GMRES method, the Loewner ordering of its residual sizes is its inherent property.
\end{remark}

The Loewner ordering is also natural to consider when block Krylov methods are formulated via polynomial approximation of the inverse of $\bcal{A}$. The polynomial with matrix-valued coefficients applied to the right-hand side $\bm{B}$ to obtain the residual is optimal in the sense of the following theorem.  We use the concept of matrix-valued polynomials, which we define in \cref{eq:lambda_matrix} and elaborate on in \Cref{sec:poly_mat}, to give the proper generalization of the GMRES residual polynomial minimization.
\begin{theorem}
Denote by $\mathcal{P}_k$ the set of all $\lambda$-matrices $P$ of degree at most $k$, with coefficients in $\Es$,  and satisfying $P(0) = I$, see \Cref{sec:poly_mat}.  The blGMRES residual norms satisfy
	\begin{equation}\label{eq:res_poly_block}
	\no{\bm{R}_k^G}= \underset{P\in\mathcal{P}_k}{\operatorname{argmin}} \no{P(\bcal{A})\circ \bm{B}},
	\end{equation}
	where $\circ$ is as in \cref{eq:circ_operation} and the minimum is considered in the generalized Loewner sense \cref{eq:gen_loewner}.
\end{theorem}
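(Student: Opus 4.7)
The plan is to mimic the classical GMRES residual polynomial argument, but carried out in the Loewner order via the block inner product $\ssxy{\cdot}{\cdot}$ on $\Es^n$. First I would identify the set of candidate residuals with $\{P(\bcal{A})\circ\bm{B}:P\in\mathcal{P}_k\}$: by \cref{eq:lin_comb} every blGMRES iterate has the form $\bm{X}_k=\sum_{i=0}^{k-1}\bcal{A}^i\bm{B}\,D_i$ for some $D_i\in\Es$, so
\begin{equation}
\bm{R}_k=\bm{B}-\bcal{A}\bm{X}_k=\Bigl(I-\sum_{i=0}^{k-1}\lambda^{i+1}D_i\Bigr)(\bcal{A})\circ\bm{B}=P(\bcal{A})\circ\bm{B},
\end{equation}
with $P\in\mathcal{P}_k$; conversely, any such $P$ arises in this way. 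This reduces the theorem to showing that, among all $P\in\mathcal{P}_k$, the Loewner-minimum of $\no{P(\bcal{A})\circ\bm{B}}$ is attained by the blGMRES residual polynomial.

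Next I would establish the block Galerkin orthogonality $\ssxy{\bm{R}_k^G}{\bm{W}}=0$ for every $\bm{W}\in\bcal{A}\,\mathcal{K}_k(\bcal{A},\bm{B})$. Since blGMRES minimizes the Frobenius norm of the block residual over the block subspace (as noted in the footnote to \cref{eq:blGMRES}), and since $\bcal{A}\mathcal{K}_k(\bcal{A},\bm{B})$ is closed under right multiplication by arbitrary $C\in\Es$, the scalar Frobenius normal equations $\mathrm{tr}(C^*\bm{W}^*\bm{R}_k^G)=0$ for all $C\in\Es$ force $\bm{W}^*\bm{R}_k^G=0$, i.e.\ $\ssxy{\bm{R}_k^G}{\bm{W}}=0$. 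This is the block analogue of the standard GMRES orthogonality $R_k^G\perp A\mathcal{K}_k$.

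With these two ingredients, the conclusion follows by the block Pythagorean identity. For any $P\in\mathcal{P}_k$, write $P(\bcal{A})\circ\bm{B}=\bm{R}_k^G+\bm{W}$ with $\bm{W}\in\bcal{A}\mathcal{K}_k(\bcal{A},\bm{B})$. Using the sesquilinearity rules in \Cref{tab:framework} and the orthogonality from the previous step,
\begin{equation}
\ssxy{P(\bcal{A})\circ\bm{B}}{P(\bcal{A})\circ\bm{B}}=\ssxy{\bm{R}_k^G}{\bm{R}_k^G}+\ssxy{\bm{W}}{\bm{W}}\overset{\text{\tiny Loewner}}{\succeq}\ssxy{\bm{R}_k^G}{\bm{R}_k^G},
\end{equation}
since $\ssxy{\bm{W}}{\bm{W}}=\bm{W}^*\bm{W}$ is Hermitian positive semidefinite. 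Translating via the definition \cref{eq:gen_loewner} of the generalized Loewner order on $\Es_0^+$, this reads $\no{\bm{R}_k^G}\preceq\no{P(\bcal{A})\circ\bm{B}}$, and equality is clearly attained at the blGMRES residual polynomial, so this is the argmin in the Loewner sense.

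The main obstacle I anticipate is bookkeeping rather than conceptual: one must check that the formalism of matrix-valued $\lambda$-matrices applied via the $\circ$ operation genuinely parametrizes the full block Krylov space (so that the set of residuals is in bijection with $\mathcal{P}_k$), and that the non-commutativity of $\Es$ is handled correctly when writing $\bm{X}_k=\sum\bcal{A}^i\bm{B}D_i$ and identifying the result with $P(\bcal{A})\circ\bm{B}$. Once \Cref{sec:poly_mat} is invoked to nail down these conventions, the orthogonality argument transfers verbatim from the scalar case, the cross terms vanish thanks to the right-$\Es$-linearity of $\bcal{A}\mathcal{K}_k(\bcal{A},\bm{B})$, and the Loewner inequality is immediate.
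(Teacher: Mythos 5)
Your proposal is correct and follows essentially the same route as the paper's proof: both parametrize candidate residuals as $P(\bcal{A})\circ\bm{B}$ with $P\in\mathcal{P}_k$, write the difference from the blGMRES residual as an element of $\bcal{A}\mathcal{K}_k(\bcal{A},\bm{B})$ (the paper via $P=\widehat{P}_k+\lambda Q_{k-1}$), kill the cross terms by orthogonality of $\bm{R}_k^G$ to that space, and conclude with the block Pythagorean identity in the Loewner order. Your extra derivation of the block orthogonality from the Frobenius normal equations is merely a fuller justification of a step the paper asserts directly.
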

\begin{proof}
From the definition of the blGMRES iterate it follows that $\bm{R}_k^G = \widehat{P}_k(\bcal{A})\circ\bm{B}$ for some $\widehat{P}_k\in\mathcal{P}_k$.
Therefore, it suffices to show that by changing the polynomial, one can only make the norm of the resulting block vector larger. 
Any $\lambda$-matrix in $\mathcal{P}_k$ can be written as $\widehat{P}_k(\lambda)+\lambda Q_{k-1}(\lambda)$, where $Q_{k-1}$ is a $\lambda$-matrix of degree at most $k-1$. Therefore, we can write
	\begin{align}\label{eq:shift_poly}
	&\ssxy{[\widehat{P}_k(\bcal{A})+\bcal{A}Q_{k-1}(\bcal{A})]\circ \bm{B}}{[\widehat{P}_k(\bcal{A})+\bcal{A}Q_{k-1}(\bcal{A})]\circ \bm{B}}\\
	&\hspace{.5cm} = \ssxy{\bcal{A}Q_{k-1}(\bcal{A})\circ \bm{B}}{\bcal{A}Q_{k-1}(\bcal{A})\circ \bm{B}} + \ssxy{\widehat{P}_k(\bcal{A})\circ \bm{B}}{\widehat{P}_k(\bcal{A})\circ \bm{B}}\\
	&\hspace{1cm} + \ssxy{\widehat{P}_k(\bcal{A})\circ \bm{B}}{\bcal{A}Q_{k-1}(\bcal{A})\circ \bm{B}}+ \ssxy{\bcal{A}Q_{k-1}(\bcal{A})\circ \bm{B}}{\widehat{P}_k(\bcal{A})\circ \bm{B}}.
	\end{align}
	Further, we observe that $\bcal{A}Q_{k-1}(\bcal{A})\circ \bm{B}\in\bcal{A}\mathcal{K}_k(\bcal{A},\bm{B})$. Since the residual $\bm{R}^G_k$ is orthogonal to this space, we obtain
	\begin{equation}
	0 = \ssxy{\widehat{P}_k(\bcal{A})\circ \bm{B}}{\bcal{A}Q_{k-1}(\bcal{A})\circ \bm{B}} = \ssxy{\bcal{A}Q_{k-1}(\bcal{A})\circ \bm{B}}{\widehat{P}_k(\bcal{A})\circ \bm{B}}^*.
	\end{equation}
	With this, \cref{eq:shift_poly} becomes
	\begin{align}
	&\ssxy{[\widehat{P}_k(\bcal{A})+\bcal{A}Q_{k-1}(\bcal{A})]\circ \bm{B}}{[\widehat{P}_k(\bcal{A})+\bcal{A}Q_{k-1}(\bcal{A})]\circ \bm{B}}\\
	&\hspace{1cm} = \overbrace{\ssxy{\bcal{A}Q_{k-1}(\bcal{A})\circ \bm{B}}{\bcal{A}Q_{k-1}(\bcal{A})\circ \bm{B}}}^{\succeq 0} + \ssxy{\widehat{P}_k(\bcal{A})\circ \bm{B}}{\widehat{P}_k(\bcal{A})\circ \bm{B}},
	\end{align}
	which finishes the proof.
\end{proof}

In the following section we show that \underline{any} admissible convergence behavior \cref{eq:admissible} is actually attainable by blGMRES. For better readability, we drop the superscript $\cdot^{G}$ in the remainder of this paper.

\section{Prescribing convergence behavior}\label{sec:prescribing_conv}
In this section, we utilize the new framework for block Krylov subspace methods and generalize some of the results of \cite{Arioli1998Krylov,DuintjerTebbens2012Any,Greenbaum1994Matrices,Greenbaum1996Any} for standard Arnoldi and GMRES to the block case. We show that, under moderate conditions, we simultaneously can prescribe the residual convergence of blGMRES($\bcal{A}$,$\bm{B}$) as well as the spectral properties of $\bcal{A}$ and the principal submatrices $\bcal{H}^{(k)}$ of the ultimate upper Hessenberg matrix $\bcal{H}$ produced by blArnoldi($\bcal{A}$,$\bm{B}$). These entities will be prescribed based on the framework introduced in \Cref{sec:framework} and have a slightly different form than in the standard case.

To prescribe the blGMRES residual norms, we require
\begin{equation}\label{eq:residual_match}
\no{\bm{R}_k} = F_k, \quad k = 0,\ldots,n-1,
\end{equation}
for some given sequence $\{F_k\}_{k=0}^{n-1}$, $F_k\in\Es^+$, satisfying the admissibility condition, with
\begin{equation}
F_0\succeq F_1 \succeq F_2 \succeq \cdots \succeq F_{n-1}\succ 0.
\end{equation}

The spectral properties of $\bcal{A}$ are in the standard case prescribed through the similarity to the companion matrix corresponding to the characteristic polynomial defined by these eigenvalues; see \cite{Greenbaum1996Any}. We will show that by considering block companion matrices of the form
\begin{equation}\label{eq:companion_matrix}
\bcal{C} =
\begin{bmatrix}
0& && C_0\\
I&\ddots&&C_1\\
&\ddots&0&\vdots\\
& & I &C_{n-1}
\end{bmatrix},
\end{equation}
we can proceed analogously in the block case.  Note that the matrix $\bcal{C}$ defined in \cref{eq:companion_matrix} is the block companion matrix corresponding to the $\lambda$-matrix
\begin{equation}
M(\lambda) = \lambda^nI - \sum_{k=0}^{n-1}\lambda^kC_k, \ \ \lambda\in\Ce.\label{eq:lambda_matrix_II}
\end{equation}
We will be looking for the matrices $\bcal{A}$ and the right-hand sides $\bm{B}$ annihilating the polynomial $M$, i.e., 
\begin{equation}\label{eq:char_poly} 
M(\bcal{A})\circ \bm{B} = 0,
\end{equation}   
where the operation $\circ$ is as in \cref{eq:circ_operation}. Using $\bcal{A} = \bcal{V}\bcal{H}\bcal{V}^*$ and $\bm{B} = \bm{V}_1\no{\bm{R}_0}$, \cref{eq:char_poly} can be rewritten as
\begin{equation}
M(\bcal{H})\circ\bm{E}_1\no{\bm{B}} = 0.
\end{equation} 
The relation between the spectral properties of $\bcal{A}$ and condition \cref{eq:char_poly} is discussed in more detail in \Cref{sec:companion}.

Due to \cite{DuintjerTebbens2012Any}, it is known that in the standard case, not only the eigenvalues of the matrix $\bcal{A}$ and therefore also the ultimate upper Hessenberg matrix $\bcal{H}$ can be prescribed, but also the eigenvalues of all the principal block submatrices $\bcal{H}^{(k)}$ of this Hessenberg matrix, i.e., the Ritz values. In the block case, spectral properties of the submatrices $\bcal{H}^{(k)}$ will be enforced analogously to the spectral properties of $\bcal{A}$ through
\begin{equation}\label{eq:char_poly_Ritz}
M^{(k)}\left(\bcal{H}^{(k)}\right)\circ \bm{E}_1\no{\bm{B}} = 0, \quad k = 1,\ldots,n-1,
\end{equation} 
where $M^{(k)}$ is defined as
\begin{equation}\label{eq:lambda_matrix_Ritz}
M^{(k)}(\lambda) = \lambda^kI - \sum_{j=0}^{k-1}\lambda^jC_j^{(k)}, \ \ \lambda\in\Ce.
\end{equation}

\subsection{Attaining prescribed spectral properties of \texorpdfstring{{$\bcal{A}$}}{A}}\label{sec:spectral_A}
Defining the Krylov matrix
\begin{equation}
\bcal{K} := \begin{bmatrix}\bm{B}&\bcal{A}\bm{B}&\bcal{A}^2\bm{B}&\cdots&\bcal{A}^{n-1}\bm{B}\end{bmatrix} \in \Es^{n\times n},
\end{equation}
condition \cref{eq:char_poly} is equivalent to
\begin{equation}\label{eq:AK_KC}
\bcal{A}\bcal{K} = \bcal{K}\bcal{C},
\end{equation}
yielding
\begin{equation}\label{eq:A_KCK}
\bcal{A} = \bcal{K}\bcal{C}\bcal{K}^{-1}, \quad \bm{B} = \bcal{K}\bm{E}_1.
\end{equation} 
Consider the unique $QR$-decomposition (with separate diagonal scaling matrix $\bcal{D}$) of the matrix $\bcal{K}$,
\begin{equation}\label{eq:K_VDU}
\bcal{K} = \bcal{V}\bcal{D}\bcal{U},
\end{equation}
with $\bcal{V}$ unitary, $\bcal{D}$ non-singular block diagonal with diagonal entries in $\Es^+$, and $\bcal{U}$ non-singular upper triangular with unit diagonal entries. With this decomposition, equations \cref{eq:A_KCK} become
\begin{equation}\label{eq:A_VUCUV}
\bcal{A} = \bcal{V}\underbrace{\bcal{D}\bcal{U}\bcal{C}\bcal{U}^{-1}\bcal{D}^{-1}}_{\bcal{H}}\bcal{V}^*, \quad \bm{B} = \bcal{V}\bm{E}_1D_{1}.
\end{equation}
Similar to the standard case, see, e.g., \cite[sec. 3]{Meurant2012GMRES}, any product of the form $\bcal{D}\bcal{U}\bcal{C}\bcal{U}^{-1}\bcal{D}^{-1}$ is a block upper Hessenberg matrix with subdiagonal entries in $\Es^+$ and vice versa. Moreover, $\bcal{DU}$ satisfies
\begin{equation}
	\bcal{DU} = \begin{bmatrix}\bm{E}_1D_1&\bcal{H}\bm{E}_1D_1&\cdots&\bcal{H}^{n-1}\bm{E}_1D_1\end{bmatrix},
\end{equation}
where $D_1$ is the first block entry of $\bcal{D}$.\,\footnote{Note that contrary to the standard case, for a given upper Hessenberg matrix $\bcal{H}$, the matrices $\bcal{U}$ and $\bcal{C}$ are not defined uniquely, since they  depend on the choice of $D_1$. For the same reason, the scaling by $\no{\bm{B}}$ cannot be easily omitted in \cref{eq:char_poly_Ritz}.}

Using the decomposition \cref{eq:A_VUCUV}, we provide a complete characterization of matrices and right-hand sides providing prescribed convergence behavior. The convergence behavior of blArnoldi and blGMRES is unitarily invariant; therefore the choice of $\bcal{V}$ plays no role in the analysis. Note that from \cref{eq:A_VUCUV}, we already have that $D_1 = \no{\bm{B}}$. In the following sections, we determines the $(n-1)(n-2)/2$ block entries of $\bcal{U}$ and the last $n-1$ diagonal block entries of $\bcal{D}$ so that the prescribed blGMRES and blArnoldi behavior is met.\footnote{In \cite{Greenbaum1996Any,Arioli1998Krylov}, the characterization of matrices and right-hand sides providing prescribed behavior is obtained through the factorization of the matrix corresponding to the Krylov residual subspaces $\bcal{A}\mathcal{K}_n(\bcal{A},\bm{B})$. This factorization provides alternative formulation of the results presented in the subsequent sections.}

\subsection{Attaining prescribed blArnoldi convergence}\label{sec:spectral_H}
We show that the blArnoldi convergence \cref{eq:char_poly_Ritz} is encoded solely in the upper triangular matrix $\bcal{U}$. 

Using the definition of $M^{(k)}(\lambda)$, condition \cref{eq:char_poly_Ritz} can be rewritten using \cref{eq:lambda_matrix_Ritz} as
\begin{equation}\label{eq:Hk_poly}
\left(\bcal{H}^{(k)}\right)^k\bm{E}_1\no{\bm{B}} = \sum_{j=0}^{k-1}\left(\bcal{H}^{(k)}\right)^j\bm{E}_1\no{\bm{B}}\,C^{(k)}_j, \quad k = 1,\ldots,n-1.
\end{equation}
Since $\bcal{H} = \bcal{D}\bcal{U}\bcal{C}\bcal{U}^{-1}\bcal{D}^{-1}$ and $\bcal{U}^{-1}\bcal{D}^{-1}\bm{E}_1\no{\bm{B}} = \bm{E}_1$, we observe that
\begin{equation}
\left(\bcal{H}^{(k)}\right)^j\bm{E}_1\no{\bm{B}} = \bcal{D}^{(k)}\bcal{U}^{(k)}\bm{E}_{j+1}, \quad j = 0,\ldots,k-1,
\end{equation}
where $\bcal{D}^{(k)}$ and $\bcal{U}^{(k)}$ are the $k$th principal submatrices of $\bcal{D}$ and $\bcal{U}$, respectively.
Substituting into \cref{eq:Hk_poly}, we have, for $k = 1,\ldots,n-1$,
\begin{equation}\label{eq:Hk_poly_1}
\bcal{H}^{(k)}\bcal{D}^{(k)}\bcal{U}^{(k)}\bm{E}_{k} = \bcal{D}^{(k)}\bcal{U}^{(k)}\sum_{j=0}^{k-1}\bm{E}_{j+1}C^{(k)}_j.
\end{equation}
Using the factorization of $\bcal{H}$ and the structure of $\bcal{D}$, $\bcal{U}$, and $\bcal{C}$, the left-hand side of \cref{eq:Hk_poly_1} can be rewritten as
\begin{align}\label{eq:Ritz_I}
\bcal{H}^{(k)}\bcal{D}^{(k)}\bcal{U}^{(k)}\bm{E}_{k} &= \left(\begin{bmatrix}\bcal{I}_k&\bm{0}\end{bmatrix}\bcal{D}\bcal{U}\bcal{C}\bcal{U}^{-1}\bcal{D}^{-1}\begin{bmatrix}\bcal{I}_k\\\bm{0}\end{bmatrix}\right)\bcal{D}^{(k)}\bcal{U}^{(k)}\bm{E}_{k}\\
&= \left(\begin{bmatrix}\bcal{I}_k&\bm{0}\end{bmatrix}\bcal{D}\right)\bcal{U}\bcal{C}\left(\bcal{U}^{-1}\bcal{D}^{-1}\begin{bmatrix}\bcal{I}_k\\\bm{0}\end{bmatrix}\bcal{D}^{(k)}\right)\bcal{U}^{(k)}\bm{E}_{k}\\
&=\bcal{D}^{(k)}\begin{bmatrix}\bcal{I}_k&\bm{0}\end{bmatrix}\bcal{U}\bcal{C}\begin{bmatrix}\left(\bcal{U}^{(k)}\right)^{-1}\\0\end{bmatrix}\bcal{U}^{(k)}\bm{E}_{k}\\
&=\bcal{D}^{(k)}\begin{bmatrix}\bcal{I}_k&\bm{0}\end{bmatrix}\bcal{U}\bcal{C}\bm{E}_{k}\\
&=\bcal{D}^{(k)}\begin{bmatrix}\bcal{I}_k&\bm{0}\end{bmatrix}\bcal{U}\bm{E}_{k+1}\\
&=\bcal{D}^{(k)}\bcal{U}^{(k)}\begin{bmatrix}\left(\bcal{U}^{(k)}\right)^{-1}&\bm{0}\end{bmatrix}\bcal{U}\bm{E}_{k+1}.
\end{align}
Furthermore, using the block inversion formula and the fact that that $\bcal{U}$ is upper triangular with identity matrices on the main diagonal, we have
\begin{equation}\label{eq:Ritz_II}
\begin{bmatrix}\left(\bcal{U}^{(k)}\right)^{-1}&\bm{0}\end{bmatrix}\bcal{U}\bm{E}_{k+1}=\left(-\begin{bmatrix}\bcal{I}_k&\bm{0}\end{bmatrix}\left(\bcal{U}^{(k+1)}\right)^{-1}\right)\bm{E}_{k+1}.
\end{equation}

 Substituting \cref{eq:Ritz_I,eq:Ritz_II} back into \cref{eq:Hk_poly_1} and premultiplying both sides by the inverse of $\bcal{D}^{(k)}\bcal{U}^{(k)}$, we finally obtain
\begin{equation}\label{eq:Hk_poly_2}
-\begin{bmatrix}\bcal{I}_k&\bm{0}\end{bmatrix}\left(\bcal{U}^{(k+1)}\right)^{-1}\bm{E}_{k+1}= \sum_{j=0}^{k-1}\bm{E}_{j+1}C^{(k)}_j, \quad k = 1,\ldots,n-1.
\end{equation}
Conditions \cref{eq:Hk_poly_2} uniquely determine the upper triangular matrix $\bcal{U}$ as
\begin{equation}\label{eq:U_Ritz}
\bcal{U} = \begin{bmatrix}
I&-C_0^{(1)}& -C_0^{(2)}&\cdots&-C_0^{(n-1)}\\
&I&-C_1^{(2)}& \cdots & \vdots\\
& & \ddots & \ddots & \vdots\\
& & & I& -C_{n-2}^{(n-1)}\\
& & & & I\\
\end{bmatrix}^{-1}.
\end{equation}

\subsection{Attaining prescribed blGMRES convergence}\label{sec:Arnoldi_vs_GMRES}
We show that for a given $\bcal{U}$, the prescribed blGMRES convergence \cref{eq:residual_match} can be achieved by a proper choice of the block diagonal matrix $\bcal{D}$, which also defines the subdiagonal entries of $\bcal{H}$. 

Let $\bcal{W}$ be any matrix such that its columns $\bm{W}_1,\ldots,\bm{W}_k$ form an orthonormal basis of $\bcal{A}\mathcal{K}_k(\bcal{A},\bm{B})$, $k=1,\ldots,n$. To satisfy \cref{eq:residual_match}, it has to hold that
\begin{align}\label{eq:G_def}
|\ssxy{\bm{B}}{\bm{W}_k}| &=  \sqrt{\ssxy{F_{k-1}}{F_{k-1}} - \ssxy{F_k}{F_k}} =: G_k, \quad k = 1,\ldots,n-1,\\
|\ssxy{\bm{B}}{\bm{W}_n}| &=  \sqrt{\ssxy{F_{n-1}}{F_{n-1}}} = {F_{n-1}} =: G_n.
\end{align}
Now we relate the columns of $\bcal{W}$ to those of $\bcal{V}$. First, we see that
\begin{align}\label{eq:WH_hat_inv}
\bcal{K}
= 
\bcal{W}
{\small \begin{bmatrix}
	\ssxy{\bm{B}}{\bm{W}_1}&I & & \\
	\ssxy{\bm{B}}{\bm{W}_2}&0&\ddots&\\
	\vdots& &\ddots&I\\
	\ssxy{\bm{B}}{\bm{W}_n} & & &0
	\end{bmatrix}}
\begin{bmatrix}I&\\&{\bcal{R}}\end{bmatrix}
=
 \bcal{W}\bcal{Q}{\small\begin{bmatrix}
	G_1&I & & \\
	G_2&0&\ddots&\\
	\vdots& &\ddots&I\\
	G_n & & &0
	\end{bmatrix}}
\begin{bmatrix}I&\\&\widehat{\bcal{R}}\end{bmatrix},
\end{align} 
where $\bcal{Q}\in\Es^{n\times n}$ is unitary block diagonal and ${\bcal{R}},\widehat{\bcal{R}}\in\Es^{(n-1)\times(n-1)}$ are nonsingular upper block triangular matrices. Combining with \cref{eq:K_VDU}, we now have two factorizations of $\bcal{K}$, i.e.,
\begin{equation}\label{eq:two_fact}
\bcal{W}\bcal{Q}\underbrace{\begin{bmatrix}
\widehat{\bm{G}} & \bcal{I}_{n-1}\\
G_n & 0
\end{bmatrix}}_{\bcal{G}}\begin{bmatrix}I&\\&&\widehat{\bcal{R}}\end{bmatrix} = \bcal{V}\bcal{D}\bcal{U}, \quad \mbox{with}\ \  \widehat{\bm{G}}
:=
\begin{bmatrix}
G_1\\G_2\\\vdots\\G_{n-1}
\end{bmatrix}.
\end{equation}

The right-hand side of \cref{eq:two_fact} is in the form of the $QR$-decomposition; we study that of the left-hand side by looking at the structure of the Cholesky factorization of $\bcal{G}^*\bcal{G}$. We observe that
\begin{align}\label{eq:cholesky}
\begin{bmatrix}
\widehat{\bm{G}}^* & G_n^*\\
\bcal{I} & 0
\end{bmatrix}\begin{bmatrix}
\widehat{\bm{G}} &\bcal{I}\\
G_n & 0
\end{bmatrix}
=
\begin{bmatrix}
\ssxy{F_0}{F_0} & \widehat{\bm{G}}^*\\
\widehat{\bm{G}}&\bcal{I}
\end{bmatrix}
=
\begin{bmatrix}
F_0^* & \\
\widehat{\bm{G}}F_0^{-1}&\bcal{R}_{G}^*
\end{bmatrix}
\begin{bmatrix}
F_0 & F_0^{-*}\widehat{\bm{G}}^*\\
&\bcal{R}_{G}
\end{bmatrix},
\end{align} 
where $\bcal{R}_{G}$ is the unique upper triangular Cholesky factor of $(\bcal{I}-\widehat{\bm{G}}F_0^{-1}F_0^{-*}\widehat{\bm{G}}^*)$, which is positive definite since $\no{\widehat{\bm{G}}F_0^{-1}}\preceq I$. There must be equality between the $R$-factors of the unique $QR$-decomposition of both sides of \cref{eq:two_fact}. Note that the matrix $\widehat{\bcal{R}}$ has arbitrary nonsingular block entries on the diagonal. To obtain the unique $R$-factor with entries in $\Es^+$, further transformation by a block diagonal unitary matrix (here denoted by $\widehat{\Gamma}$) is needed. Using convenient decompositions of the involved matrices, we obtain from \cref{eq:two_fact} that
\begin{align}
\begin{bmatrix}
F_0 & F_0^{-*}\widehat{\bm{G}}^*\\
&\bcal{R}_{G}
\end{bmatrix}
\begin{bmatrix}I&\\&\widehat{\bcal{R}}\end{bmatrix}
&= 
\begin{bmatrix}I&\\&\widehat{\Gamma}\end{bmatrix}
\bcal{D}\bcal{U}
\\
\begin{bmatrix}
F_0 & F_0^{-*}\widehat{\bm{G}}^*\\
&\bcal{R}_{G}
\end{bmatrix}
\begin{bmatrix}I&\\&\widehat{\bcal{R}}\end{bmatrix}
&= 
\begin{bmatrix}I&\\&\widehat{\Gamma}\end{bmatrix}
\begin{bmatrix}D_1&\\&\widehat{\bcal{D}}\end{bmatrix}
\begin{bmatrix}I&\bm{U}_{12}\\&\bcal{U}_{22}\end{bmatrix}
\\
\begin{bmatrix}
F_0 & F_0^{-*}\widehat{\bm{G}}^*\widehat{\bcal{R}}\\
&\bcal{R}_{G}\widehat{\bcal{R}}
\end{bmatrix}
&= 
\begin{bmatrix}D_1&D_1\bm{U}_{12}\\&\widehat{\Gamma}\widehat{\bcal{D}}\bcal{U}_{22}\end{bmatrix};
\end{align}
see also \cite[sec. 3]{DuintjerTebbens2012Any} for the analog in the standard case.

We proceed by comparing individual block entries. Equality of the first diagonal block entries gives 
\begin{equation}\label{eq:F0}
D_1 = F_0,
\end{equation}
which is satisfied trivially.
Equality of the second diagonal block entries gives an expression for $\widehat{\bcal{R}}$
\begin{equation}\label{eq:R_hat}
\widehat{\bcal{R}}=\bcal{R}_{G}^{-1}\widehat{\Gamma}\widehat{\bcal{D}}\bcal{U}_{22}.
\end{equation}
Substituting from \cref{eq:F0} and \cref{eq:R_hat} to the equation given by the off-diagonal entry, we obtain
\begin{align}\label{eq:D_cond}
F_0^{-*}\widehat{\bm{G}}^*\bcal{R}_{G}^{-1}\widehat{\Gamma}\widehat{\bcal{D}}\bcal{U}_{22} &= 
F_0\bm{U}_{12}
\\
\left(\ssxy{F_0}{F_0}^{-1}\widehat{\bm{G}}^*\bcal{R}_{G}^{-1}\right)\left(\widehat{\Gamma}\widehat{\bcal{D}}\right)
&= 
\bm{U}_{12}\bcal{U}_{22}^{-1}.
\end{align}

We now investigate the objects in equation \cref{eq:D_cond}.  First, using the block inversion formula on \cref{eq:U_Ritz}, we have  
\begin{equation}
\bm{U}_{12}\bcal{U}_{22}^{-1}= 
\begin{bmatrix}C^{(1)}_0 & \cdots & C^{(n-1)}_0\end{bmatrix}.
\end{equation}
Second, we apply \Cref{th:aux_lemma} to $\widehat{\bm{G}}F_0^{-1}$, and together with the definition of $\widehat{\bm{G}}$, we obtain 
\begin{align}
\bm{E}_k^T\bcal{R}_{G}^{-*}\widehat{\bm{G}}\,\ssxy{F_0}{F_0}^{-*}
& = \bm{E}_k^T\bcal{R}_{G}^{-*}\left(\widehat{\bm{G}}\,F_0^{-1}\right)F_0^{-*}\\
&= Q_k\sqrt{\ssxy{F_{k}}{F_{k}}^{-1} -\ssxy{F_{k-1}}{F_{k-1}}^{-1}}, \quad |Q_k| = I. 
\end{align}
This substituting to \cref{eq:D_cond} and defining $\widehat{\bcal{Q}} := \text{diag}(Q_1^*,\ldots,Q_{n-1}^*)$ gives
\begin{equation}\label{eq:consistency_mat_form}
\sqrt{\ssxy{F_{k}}{F_{k}}^{-1} -\ssxy{F_{k-1}}{F_{k-1}}^{-1}}\,^*\left(\bm{E}_k^T\widehat{\bcal{Q}}\widehat{\Gamma}\widehat{\bcal{D}}\bm{E}_k\right)
= C^{(k)}_0, \quad k=1,\ldots,n-1.
\end{equation}

We now have $n-1$ equations for the $n-1$ block entries of the block diagonal matrix $\widehat{\bcal{Q}}\widehat{\Gamma}\widehat{\bcal{D}}$.
To ensure that a non-singular $\widehat{\bcal{Q}}\widehat{\Gamma}\widehat{\bcal{D}}$ satisfying \cref{eq:consistency_mat_form} exists, there must be some consistency between $\ssxy{F_{k}}{F_{k}}^{-1} -\ssxy{F_{k-1}}{F_{k-1}}^{-1}$ and $C^{(k)}_0$. More precisely, it has to hold that, for $k = 1,\ldots,n-1$,
\begin{equation}\label{eq:range_cond}
\text{Range}\left(\sqrt{\ssxy{F_{k}}{F_{k}}^{-1} -\ssxy{F_{k-1}}{F_{k-1}}^{-1}}\,^*\right) = \text{Range}\left(C^{(k)}_0\right),
\end{equation}
or alternatively\footnote{Using the fact that $\text{Range}(R^\ast) = \text{Range}(R^\ast R)$.}
\begin{equation}\label{eq:range_cond_final}
\text{Range}\left(\ssxy{F_{k}}{F_{k}}^{-1} -\ssxy{F_{k-1}}{F_{k-1}}^{-1}\right) = \text{Range}\left(C^{(k)}_0\right).
\end{equation}

If $\text{Range}\left(C^{(k)}_0\right) = \text{Range}\left(\ssxy{F_{k}}{F_{k}}^{-1} -\ssxy{F_{k-1}}{F_{k-1}}^{-1}\right) = \Ce^s$, then the entry $\bm{E}_k^T\widehat{\bcal{D}}\bm{E}_k$ is defined uniquely as
\begin{align}
	\bm{E}_k^T\widehat{\bcal{D}}\bm{E}_k &= \left|\sqrt{\ssxy{F_{k}}{F_{k}}^{-1} -\ssxy{F_{k-1}}{F_{k-1}}^{-1}}^{-*}C^{(k)}_0\right|\\
	&= \sqrt{\left(C^{(k)}_0\right)^*\left(\ssxy{F_{k}}{F_{k}}^{-1} -\ssxy{F_{k-1}}{F_{k-1}}^{-1}\right)^{-1}C^{(k)}_0}.
\end{align}
In other cases satisfying \eqref{eq:range_cond_final}, there is certain freedom in the components corresponding to the null space of $\sqrt{\ssxy{F_{k}}{F_{k}}^{-1} -\ssxy{F_{k-1}}{F_{k-1}}^{-1}}\,^*$.

In the sense of \Cref{sec:peak_plateau}, \cref{eq:range_cond_final} implies that the FOM residual norm must satisfy
\begin{equation}
\text{Range}\left(\ssxy{\bm{R}_{k}^F}{\bm{R}_{k}^F}^{\dagger}\right) = \text{Range}\left(\ssxy{\bm{R}_{k}^F}{\bm{R}_{k}^F}\right) = \text{Range}\left(C^{(k)}_0\right).
\end{equation}

\subsection{Final result}
In the standard case, \cref{eq:range_cond_final} reduces to the well-known condition that GMRES stagnates if and only if we obtain at least one zero Ritz value. In the block case, (partial) stagnation also appears if and only if $\bcal{H}^{(k)}$ is singular. But in addition, the rank deficiency of $\ssxy{F_{k}}{F_{k}}^{-1} -\ssxy{F_{k-1}}{F_{k-1}}^{-1}$, encoding the stagnation, and the rank deficiency of $C^{(k)}_0$, encoding the singularity of $\bcal{H}^{(k)}$, must have the same structure, i.e., the corresponding matrices must share the same range. If this is the case, we can prescribe the convergence of the block Arnoldi method and the block GMRES method at the same time, as summarized in the following theorem, which generalizes \cite[Th. 3.6]{DuintjerTebbens2012Any}.

\begin{theorem}\label{th:gmres_arnoldi}
	Let $\left\{M^{(k)}\right\}_{k=1}^{n}$ be any sequence of $\lambda$-matrices,
	\begin{equation}
		M^{(k)}(\lambda) = \lambda^kI - \sum_{j=0}^{k-1}\lambda^jC_j^{(k)}, 
		\end{equation} 
	$C_0^{(n)}$ nonsingular, and let $\{F_k\}_{k=0}^{n-1}$, $F_k\in\Es^+$, be any sequence satisfying
	\begin{equation}
	F_0\succeq F_1 \succeq F_2 \succeq \cdots \succeq F_{n-1}\succ 0.
	\end{equation}
	Under the assumption that the two sequences satisfy the consistency condition
		\begin{equation}
		\text{Range}\left(\ssxy{F_{k}}{F_{k}}^{-1} -\ssxy{F_{k-1}}{F_{k-1}}^{-1}\right) = \text{Range}\left(C^{(k)}_0\right),
		\end{equation}
	the following two assertions are equivalent:
	\begin{enumerate}
		\item The residuals of blGMRES($\bcal{A}$,$\bm{B}$) satisfy 
		\begin{equation}
		\no{\bm{R}_k} = F_k, \quad k = 0,\ldots,n-1,
		\end{equation}
		and the principal submatrices of the Hessenberg matrix generated by blArnoldi($\bcal{A}$,$\bm{B}$) satisfy
		\begin{equation}
		M^{(k)}\left(\bcal{H}^{(k)}\right)\circ \bm{E}_1\no{\bm{B}} = 0, \quad k = 1,\ldots,n.
		\end{equation}
		\item The matrix $\bcal{A}$ and the starting vector/right-hand side $\bm{B}$ are of the form 
		\begin{equation}
		\bcal{A} =\bcal{V}\bcal{D}\bcal{U}\bcal{C}\bcal{U}^{-1}\bcal{D}^{-1}\bcal{V}^*, \quad \bm{B} = \bcal{V}\bm{E}_1F_0,
		\end{equation}
		where $\bcal{V}$ is a unitary matrix, $\bcal{C}$ is the block companion matrix corresponding to $M^{(n)}$,
		\begin{equation}
		\bcal{U} = \begin{bmatrix}
		I&-C_0^{(1)}& -C_0^{(2)}&\cdots&-C_0^{(n-1)}\\
		&I&-C_1^{(2)}& \cdots & \vdots\\
		& & \ddots & \ddots & \vdots\\
		& & & I& -C_{n-2}^{(n-1)}\\
		& & & & I\\
		\end{bmatrix}^{-1},
		\end{equation}
		and $\bcal{D}$ is a block diagonal matrix with entries in $\Es^+$ satisfying
		\begin{align}\label{eq:D_cond_theorem}
		D_{1}&= F_0,\\
		\sqrt{\ssxy{F_{k-1}}{F_{k-1}}^{-1} -\ssxy{F_{k-2}}{F_{k-2}}^{-1}}\,^*\left(Q_{k}D_{k}\right)&= C^{(k-1)}_0, \quad k=2,\ldots,n,
		\end{align}
		for some $Q_k\in\Es$, $|Q_k| = I$.
	\end{enumerate}
\end{theorem}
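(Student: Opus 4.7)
The plan is to obtain the theorem as a synthesis of the constructions developed in Sections~\ref{sec:spectral_A}--\ref{sec:Arnoldi_vs_GMRES}. Both directions rest on the same backbone: the unique block $QR$-factorization $\bcal{K} = \bcal{V}\bcal{D}\bcal{U}$ of the Krylov matrix together with the identity $\bcal{A}\bcal{K}=\bcal{K}\bcal{C}$ (with $\bcal{C}$ the block companion matrix of $M^{(n)}$), which yields $\bcal{A}=\bcal{V}\bcal{D}\bcal{U}\bcal{C}\bcal{U}^{-1}\bcal{D}^{-1}\bcal{V}^{\ast}$ and $\bm{B}=\bcal{V}\bm{E}_1 D_1$ as in \cref{eq:AK_KC}--\cref{eq:A_VUCUV}.

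For the implication (2)~$\Rightarrow$~(1), I would proceed in three steps. First, plugging the prescribed $\bcal{A}$, $\bm{B}$ into the definition of $\bcal{K}$ gives $\bcal{K}=\bcal{V}\bcal{D}\bcal{U}$, so the unitary invariance of blArnoldi and the uniqueness of the $QR$-factorization (with diagonal blocks in $\Es^+$) show that the Hessenberg matrix returned by blArnoldi($\bcal{A},\bm{B}$) is $\bcal{H}=\bcal{D}\bcal{U}\bcal{C}\bcal{U}^{-1}\bcal{D}^{-1}$ and that $\bm{B}=\bcal{V}\bm{E}_1 F_0$. Second, because $\bcal{U}$ has exactly the inverse structure of \cref{eq:U_Ritz}, the block-inversion identity \cref{eq:Ritz_II} yields $-\begin{bmatrix}\bcal{I}_k & \bm{0}\end{bmatrix}(\bcal{U}^{(k+1)})^{-1}\bm{E}_{k+1}=\sum_{j=0}^{k-1}\bm{E}_{j+1}C^{(k)}_j$, and reversing the derivation \cref{eq:Hk_poly_1}--\cref{eq:Hk_poly_2} produces the Ritz polynomial relations $M^{(k)}(\bcal{H}^{(k)})\circ \bm{E}_1\no{\bm{B}} = 0$. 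Third, substituting \cref{eq:D_cond_theorem} into the chain of identities culminating in \cref{eq:consistency_mat_form} gives back $\no{\bm{R}_k} = F_k$.

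For the converse (1)~$\Rightarrow$~(2), the standing assumption of no breakdown in blArnoldi guarantees that $\bcal{K}$ is nonsingular and hence admits a unique factorization $\bcal{K}=\bcal{V}\bcal{D}\bcal{U}$ of the required form. The Ritz condition for $k=n$, combined with the nonsingularity of $C^{(n)}_0$, forces $\bcal{A}\bcal{K}=\bcal{K}\bcal{C}$ for the block companion matrix $\bcal{C}$ of $M^{(n)}$, from which the stated expression for $\bcal{A}$ follows. The remaining Ritz conditions for $k<n$, processed through \cref{eq:Hk_poly_1}--\cref{eq:Hk_poly_2}, uniquely determine $\bcal{U}$ to have the form \cref{eq:U_Ritz}. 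Finally, the prescribed blGMRES residuals and the derivation \cref{eq:WH_hat_inv}--\cref{eq:consistency_mat_form} in Section~\ref{sec:Arnoldi_vs_GMRES} yield the constraints \cref{eq:D_cond_theorem} on the diagonal blocks of $\bcal{D}$, with the unitary factors $Q_k\in\Es$, $|Q_k|=I$, recording the freedom in the block diagonal unitary matrix $\widehat{\bcal{Q}}\widehat{\Gamma}$.

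The main obstacle I anticipate is the careful bookkeeping around the consistency condition \cref{eq:range_cond_final}. One must check that this range equality is both necessary and sufficient for the existence of $D_k\in\Es^+$ satisfying \cref{eq:D_cond_theorem}, and that the freedom on the null space of $\sqrt{\ssxy{F_k}{F_k}^{-1}-\ssxy{F_{k-1}}{F_{k-1}}^{-1}}\,^{\ast}$ is exactly parametrized by the $Q_k$ with $|Q_k|=I$. This uses the auxiliary lemma invoked in Section~\ref{sec:Arnoldi_vs_GMRES} (applied to $\widehat{\bm{G}}F_0^{-1}$) together with the uniqueness of the block $QR$-factorization constrained to have diagonal blocks in $\Es^+$; the remaining verifications amount to routine manipulations that re-trace the derivations already performed in the preceding subsections.
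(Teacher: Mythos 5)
Your proposal is correct and follows essentially the same route as the paper: the paper's own proof is a one-line appeal to the constructions of Sections~\ref{sec:spectral_A}--\ref{sec:Arnoldi_vs_GMRES}, and your plan is precisely a careful assembly of those same ingredients (the factorization $\bcal{K}=\bcal{V}\bcal{D}\bcal{U}$ with $\bcal{A}\bcal{K}=\bcal{K}\bcal{C}$, the determination of $\bcal{U}$ from the Ritz conditions via \cref{eq:Hk_poly_1}--\cref{eq:Hk_poly_2}, and the determination of $\bcal{D}$ from the residual norms via \cref{eq:consistency_mat_form}), with both implications obtained by running the derivations forward and backward. Your flagged concern about the role of the range condition and the parametrization by $Q_k$ is exactly the point the paper addresses at the end of Section~\ref{sec:Arnoldi_vs_GMRES}, so no gap remains.
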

\begin{proof}
	The proof follows from the construction in \Cref{sec:spectral_A,sec:spectral_H,sec:Arnoldi_vs_GMRES}.
\end{proof}
\Cref{th:gmres_arnoldi} shows that, similar to the standard case, we can prescribe the convergence behavior for blArnoldi independently of the residual convergence of blGMRES, as long as the (partially) stagnating iterations are reflected in the corresponding $\lambda$-matrix and vice versa.

\begin{remark}
If we do not prescribe the blGMRES convergence, we only have a condition on $\bcal{U}$, and  $\bcal{D}$ can be an arbitrary block diagonal matrix with entries in $\Es^+$. Similarly, if the blArnoldi convergence is not prescribed, \cref{eq:D_cond_theorem} only gives a condition on the first row of $(\bcal{D}\bcal{U})^{-1}$, which has to satisfy \begin{align}
\bm{E}_1^T(\bcal{D}\bcal{U})^{-1}\bm{E}_1 &=F_0^{-1} \\
\bm{E}_1^T(\bcal{D}\bcal{U})^{-1}\bm{E}_k &= \sqrt{\ssxy{F_{k-1}}{F_{k-1}}^{-1} -\ssxy{F_{k-2}}{F_{k-2}}^{-1}}\,^*Q_{k}, \quad k=2,\dots,n,
\end{align}
cf. \cite[Th. 1]{DuintjerTebbens2014Prescribing}.
\end{remark}
\begin{remark} The Ritz value companion transform $\bcal{U}$ makes the intermediate $\lambda$-matrices $M^{(k)}$ completely independent of the last $\lambda$-matrix $M^{(n)}$ and these are all independent of the matrix $\bcal{D}$ defining the subdiagonal entries of $\bcal{H}$. In the standard case, this implies that the (often used) residual measure of the Ritz value convergence	
	\begin{equation}\label{eq:ritz_residual}
	\left\|AV^{(k)}\bm{z}^{(k)}_i - V^{(k)}\bm{z}^{(k)}_i\theta^{(k)}_i\right\| = \left|h_{k+1,k}\bm{e}_k^T\bm{z}^{(k)}_i\right|, \quad \left(\theta^{(k)}_i,\bm{z}^{(k)}_i\right) \ \text{an eigenpair of }H^{(k)},
	\end{equation}
	may provide little or no useful information about the convergence of the Ritz values to the eigenvalues of $A$; see \cite[pp. 964--965]{DuintjerTebbens2012Any}. With the block generalization of the Jordan form, see, e.g., \cite{Gohberg2009Matrix}, a similar conclusion is possible for the block version. Further analysis of this topic is however beyond the scope of this paper.
\end{remark} 

In the next section, we discuss some of the fundamental differences between the results presented in the preceding papers on standard Arnoldi and GMRES, and the results regarding their block counterparts presented here.

\subsection{The role of polynomials with matrix coefficients}\label{sec:companion}
In \Cref{sec:prescribing_conv}, the spectral properties of $\bcal{A}$ and the submatrices $\bcal{H}^{(k)}$ are prescribed through the $\lambda$-matrices $M^{(k)}(\lambda)$. The relation \cref{eq:char_poly}, equivalent to \cref{eq:AK_KC}, together with the assumption that $\mathcal{K}_n(\bcal{A},\bm{B})$ is of full rank, means that $M(\bcal{A})$ is zero when evaluated on $n$ linearly independent block vectors, as it satisfies
\begin{equation}\label{eq:MA_circ_V_i}
M(\bcal{A})\circ \bm{V}_i = 0 \quad \text{for} \ \ \bm{V}_i = \bcal{A}^{i-1}\bm{B}, \quad i = 1,\ldots,n.
\end{equation}

In the standard case, \cref{eq:MA_circ_V_i} implies $M(A)=0$, i.e., $M$ is the characteristic polynomial of $A$, and $M(A)\circ \bm{v} = 0$, $\forall \bm{v}\in\Ce^n$. This can be also seen from
\begin{equation}
M(A)\circ \left(\sum_{i = 1}^n\bm{v}_id_i\right) = \sum_{i = 1}^n\overbrace{(M(A)\circ \bm{v}_i)}^{=0}d_i =0.
\end{equation} 
In other words, requiring $M(A)\circ \bm{b} = 0$ is equivalent to prescribing the eigenvalues of $A$.

In the block case, the situation is different. First, $M(\bcal{A})$ itself cannot be defined, because of the clash of dimensions. Further, despite the fact that any vector $\bm{V}\in\Es^n$ can be written as a block linear combination of $\bm{V}_1,\ldots,\bm{V}_n$, i.e., $\bm{V} = \sum_{i = 1}^n\bm{V}_iD_i$, \cref{eq:MA_circ_V_i} does \underline{not} imply $M(\bcal{A})\circ \bm{V} = 0$, $\forall \bm{V}\in\Es^n$. This is because 
\begin{align}
M(\bcal{A})\circ \left(\sum_{i = 1}^n\bm{V}_iD_i\right)  &= \bcal{A}^n\left(\sum_{i = 1}^n\bm{V}_iD_i\right) - \sum_{k=0}^{n-1}\bcal{A}^k\left(\sum_{i = 1}^n\bm{V}_iD_i\right)C_k\\ &= \sum_{i = 1}^n \left(\bcal{A}^n\bm{V}_iD_i - \sum_{k=0}^{n-1}\bcal{A}^k\bm{V}_iD_iC_k\right)\label{eq:DC}\\
&\neq \sum_{i = 1}^n \left(\bcal{A}^n\bm{V}_iD_i - \sum_{k=0}^{n-1}\bcal{A}^k\bm{V}_iC_kD_i\right) = \sum_{i = 1}^n\overbrace{(M(\bcal{A})\circ \bm{V}_i)}^{=0}D_i =0\label{eq:CD}.
\end{align}
The transition between \cref{eq:DC} and \cref{eq:CD} is only possible when $D_i$ and $C_k$ commute for all $i$ and $k$. This makes the standard case different from the block case.

From \cref{eq:A_KCK}, it is clear that the eigenvalues of $\bcal{A}$ are defined by the eigenvalues of $\bcal{C}$ and coincide with the latent roots of $M$; see also \Cref{sec:poly_mat}. For a given block companion matrix ${\bcal{C}\in\Es^{n\times n}}$, the manifold of block companion matrices similar to $\bcal{C}$ has dimension $ns^2-ns$; see, e.g., \cite{Edelman1995Polynomial}. Therefore the eigenvalues do not define $M$ uniquely. This can be seen also from the fact that the coefficients $C_0,\ldots,C_{n-1}$ have $ns^2$ free parameters, while there only are $ns$ eigenvalues. 

Instead of focusing on the eigenvalues of $\bcal{A}$, it is advantageous to remain in the block setting and look at the solvents of the $\lambda$-matrix instead; see \Cref{sec:poly_mat}. Prescribing the (chain of) solvents $S_1,\ldots,S_n$ of the $\lambda$-matrix \cref{eq:lambda_matrix_II}, the coefficients of the $\lambda$-matrix $M$ are defined uniquely through \cref{eq:roots_to_coeff}. The eigenvalues of $\bcal{A}$ are the eigenvalues of the solvents, but changing the eigenvectors of the individual solvents will have impact on the eigenvectors of $\bcal{A}$. To achieve $C_0$ nonsingular, we only require that each of the solvents $S_1,\ldots,S_n$ of $M$ is nonsingular. A similar result holds for the submatrices $\bcal{H}^{(k)}$, with the exception that singular solvents are allowed.

Concluding, in the standard case, if the eigenvalues of $\bcal{A}$ and the convergence behavior of both GMRES and Arnoldi are prescribed, then, in case of no stagnation of GMRES, all matrices and right-hand sides satisfying these conditions will be identical up to a unitary transform. Each step in which GMRES stagnates provides one extra free parameter, represented by the entry of the diagonal matrix $\bcal{D}$ in \cref{eq:D_cond_theorem}. In the block case, prescribing the eigenvalues of $\bcal{A}$ and the convergence behavior of both blGMRES and blArnoldi will still give us certain freedom in the choice of the eigenvectors of $\bcal{A}$, plus again some extra free parameters in the stagnating iterations. From this point of view, the eigenvalues of $\bcal{A}$ are even less indicative regarding the residual convergence behavior of blArnoldi and blGMRES than they are in the standard case.

\section{Conclusions and open questions}\label{sec:conclusion}
The analysis of block Krylov subspace methods has always presented a challenge beyond those encountered with classical non-block methods.  This is due to the interaction between the right-hand sides.  We have demonstrated here that the $^{\ast}$-algebra approach introduced  in \cite{Frommer2017Block} enables us to cleanly obtain the same sort of results one sees for non-block Krylov subspace methods, which have previously been unavailable.  These results fill a certain gap in the understanding of the convergence behavior of block Krylov methods for non-symmetric matrices. 

We have thus obtained block versions of the fundamental results regarding the admissible and attainable convergence of standard Arnoldi and GMRES presented in a series of papers published over the last 25 years. Extending the framework introduced in \cite{Frommer2017Block}, we were able to keep the formal notation as close as possible to the original results, with the block generalization of the Givens transformation allowing for extension of well-known textbook relations for the residuals of GMRES and FOM to the block case. 
This framework allows us to see the interdependence of the residual convergence behaviors of the individual systems.
We explicitly formulated conditions on the admissible convergence behavior of the residuals of blGMRES using an appropriate block generalization of the norm. 

Under the assumption that blGMRES does not converge prematurely, we were then able to completely characterize matrices and right-hand sides producing any prescribed admissible convergence behavior. Furthermore, spectral properties of the matrix can be enforced through the similarity to a block companion matrix. We showed that arbitrary convergence of blArnoldi for the eigenvalue problem is possible, and that arbitrary convergence of blGMRES and blArnoldi can be, under moderate assumptions, achieved simultaneously. Combining these results with the theory of block companion matrices, we showed that in a certain sense, increasing the number of right-hand sides reduces the predictive value of the eigenvalues of $\bcal{A}$.

It should be noted that, as Meurant pointed out in \cite{Meurant2012GMRES}, these results all concern \textit{residual} convergence behavior.  It is observed that for a class of matrices (each with different spectral properties) constructed to exhibit a specific admissible residual convergence behavior, the actual error convergence behavior may very well exhibit dependence on spectral properties of the matrix.  Thus, one interpretation of the results presented in this paper and the work of the last 25 years on this topic is that spectral properties of a non-Hermitian matrix may not a priori tell us much about the behavior of GMRES with respect to \textit{our chosen method} of measuring convergence.  Thus one may connect the results in this and related works to the notion that one should measure (residual) error using an appropriate norm.  However, this is beyond the scope of the current paper. 

Certain important aspects of blGMRES and blArnoldi convergence also are beyond the scope of this paper. Since the Arnoldi algorithm is based on full orthogonalization, the amount of data that needs to be stored, as well as the computational complexity, grows with each iteration. In practical computations, restarting the orthogonalization process is therefore often unavoidable. For this reason, analysis of the influence of the restarts on the admissible convergence represents another important research direction. Furthermore, the blArnoldi process breaks down when fewer than $s$ linearly independent Arnoldi vectors are generated in the $k$th step. The situation when no single system has converged but rather a linear combination of the columns of $\bm{X}$ lies in $\mathcal{K}_k(\bcal{A},\bm{B})$ is particularly unpleasant. In the considered framework, resolving this situation by reducing the block size is not directly possible, since by this, we change $\Es$. To avoid change in the block size, the linearly dependent vectors can be replaced by some auxiliary (random) vectors. This direction will be further explored elsewhere.

\appendix

\section{Polynomials with matrix coefficients}\label{sec:poly_mat}
In this section, we recall some terminology and fundamental results from the theory of polynomials with matrix coefficients. Statements presented in this section are adopted from \cite{Dennis1976algebraic}.  For a more comprehensive overview of the topic, we recommend \cite{Gohberg2009Matrix}. 

Let $C_k\in\Ce^{s\times s}$, $k= 0,\ldots,n-1$. We call
\begin{equation}
M(\lambda) = \lambda^nI - \sum_{k=0}^{n-1}\lambda^kC_k, \quad \lambda\in\Ce,\label{eq:lambda_matrix}
\end{equation} 
a \emph{$\lambda$-matrix} and
\begin{equation}
M(X) = X^n - \sum_{k=0}^{n-1}C_kX^k, \quad X\in\Ce^{s\times s}, \label{eq:matrix_polynomial}
\end{equation} 
a \emph{matrix polynomial}. 
We call $\lambda\in\Ce$ a \emph{latent root} of the $\lambda$-matrix $M$ if $M(\lambda)$ is singular, and $S\in\Ce^{s\times s}$ a \emph{right solvent} of the matrix polynomial $M$ if $M(S) = 0$. The latent roots and solvents are related as follows. 
\begin{theorem}
	If $S$ is a solvent of the matrix polynomial $M$, then the $\lambda$-matrix $M$ can be factorized as
	\begin{equation}
	M(\lambda) = Q(\lambda)(I\lambda - S).
	\end{equation}
\end{theorem}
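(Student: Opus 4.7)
The plan is to proceed by the standard trick of long division on the right by the linear factor $(\lambda I - S)$, exploiting the fact that the scalar $\lambda$ commutes with everything so no ordering issues arise with it, while being careful that $S$ and the coefficients $C_k$ generally do not commute. Since we are given $M(S) = 0$ as a right solvent identity (coefficients on the left, $S^k$ on the right), we should keep $(\lambda I - S)$ as a right factor throughout.

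The first step is to establish the purely algebraic identity
\begin{equation}
\lambda^k I - S^k \;=\; P_k(\lambda)\,(\lambda I - S), \qquad P_k(\lambda) := \sum_{j=0}^{k-1} \lambda^{k-1-j} S^j,
\end{equation}
valid for each integer $k \geq 1$. This is just a telescoping computation and uses only that $\lambda$ commutes with $S$, so no issue with non-commutativity arises. Note that $P_k$ is itself a $\lambda$-matrix of degree $k-1$.

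The second step is to write
\begin{equation}
M(\lambda) \;=\; M(\lambda) - M(S) \;=\; (\lambda^n I - S^n) \;-\; \sum_{k=0}^{n-1} C_k(\lambda^k I - S^k),
\end{equation}
where the first equality uses the solvent hypothesis $M(S)=0$ and the second uses $\lambda^k C_k = C_k \lambda^k$ (since $\lambda$ is scalar) to move the powers of $\lambda$ to the right of $C_k$, so that the $k=0$ term vanishes. Applying the identity from step one to each parenthesis and factoring $(\lambda I - S)$ out on the right yields
\begin{equation}
M(\lambda) \;=\; \Bigl[\,P_n(\lambda) \;-\; \sum_{k=1}^{n-1} C_k\, P_k(\lambda)\,\Bigr]\,(\lambda I - S),
\end{equation}
which is precisely the desired factorization with $Q(\lambda)$ the bracketed $\lambda$-matrix of degree $n-1$.

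There is really no hard step here; the only thing one has to be scrupulous about is the order of multiplication, since the $C_k$ sit on the left of the powers of $S$ throughout the derivation (matching the convention used to define a right solvent) and the linear factor $(\lambda I - S)$ must be pulled out consistently on the right. Once the commutation of $\lambda$ with both $S$ and the $C_k$ is invoked correctly, the two displayed manipulations above close the argument.
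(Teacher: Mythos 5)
Your proof is correct. The paper itself gives no proof of this theorem (it is stated in the appendix as a result adopted from the cited Dennis--Traub--Weber reference), and your argument is precisely the standard matrix-polynomial remainder-theorem computation: the telescoping identity $\lambda^k I - S^k = \bigl(\sum_{j=0}^{k-1}\lambda^{k-1-j}S^j\bigr)(\lambda I - S)$ is verified correctly, the rewriting $M(\lambda)=M(\lambda)-M(S)$ correctly uses that the scalar $\lambda$ commutes with the coefficients $C_k$ while keeping the $C_k$ to the left of the powers of $S$ as the right-solvent convention requires, and the resulting $Q(\lambda)$ is a monic $\lambda$-matrix of degree $n-1$ as needed.
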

\begin{corollary}
	The $s$  eigenvalues of the solvent $S$ of the matrix polynomial $M$ are all latent roots of the $\lambda$-matrix $M$.
\end{corollary}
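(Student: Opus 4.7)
The claim follows almost immediately from the factorization established in the preceding theorem, so the proof will be a short deduction rather than a substantive argument. The plan is to take an arbitrary eigenvalue $\mu$ of the solvent $S$, produce a witness nonzero vector in the kernel of $M(\mu)$, and conclude that $\mu$ is a latent root by the definition just given in the text.

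Concretely, I would proceed as follows. Let $\mu \in \Ce$ be an eigenvalue of $S$, and let $v \in \Ce^s$ be a corresponding eigenvector, so that $v \neq 0$ and $(\mu I - S)v = 0$. Invoking the theorem, write the factorization
\begin{equation}
M(\lambda) = Q(\lambda)(\lambda I - S),
\end{equation}
and evaluate at $\lambda = \mu$ to obtain
\begin{equation}
M(\mu)\, v = Q(\mu)\,(\mu I - S)\, v = Q(\mu)\, 0 = 0.
\end{equation}
Since $v \neq 0$, this shows $M(\mu)$ has a nontrivial kernel and is therefore singular. By the definition of a latent root, $\mu$ is a latent root of the $\lambda$-matrix $M$. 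Applying this to each of the (up to) $s$ eigenvalues of $S$ yields the corollary.

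There is essentially no obstacle here: the only subtlety worth noting is that the factorization from the theorem is a genuine identity of $\lambda$-matrices (polynomials in $\lambda$ with matrix coefficients), so evaluating at a scalar $\mu$ and multiplying by a fixed vector $v$ is unambiguous and commutes with the factorization in the intended way. No appeal to the non-commutative subtleties of $\Es$ from the main body of the paper is needed, since $\mu$ is a scalar and the products above are ordinary matrix-vector products over $\Ce$.
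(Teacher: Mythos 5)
Your proof is correct and follows the route the paper intends: the corollary is stated as an immediate consequence of the factorization $M(\lambda) = Q(\lambda)(I\lambda - S)$, and your argument (evaluate at an eigenvalue $\mu$ of $S$, apply to an eigenvector $v$, conclude $M(\mu)v = 0$ so $M(\mu)$ is singular) is exactly that deduction. The paper itself gives no written proof, citing the source literature, so there is nothing to compare beyond noting that your kernel-vector argument is the standard one.
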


A sequence of matrices $S_1,\ldots,S_n$ forms a \emph{chain of solvents} if
\begin{equation}
M(\lambda)= (I\lambda - S_1)(I\lambda - S_2)\cdots(I\lambda - S_n).\label{eq:chain_of_solvents}
\end{equation} 
It follows directly from \cref{eq:chain_of_solvents}  that
\begin{align}
C_{n-1} &= S_1+S_2+\cdots + S_n,\\
C_{n-2} &= - (S_1S_2 + S_1S_3+\cdots+ S_{n-1}S_n), \\\label{eq:roots_to_coeff}
&\vdots\\
C_0 &= (-1)^{n-1} S_1S_2\cdots S_n.
\end{align}
The matrix $C$, which has the form
\begin{equation}
C = 
\begin{bmatrix}
0& && C_0\\
I&\ddots&&C_1\\
&\ddots&0&\vdots\\
& & I &C_{n-1}
\end{bmatrix}
\end{equation}
is called the \emph{block companion matrix} associated with the $\lambda$-matrix \cref{eq:lambda_matrix} or equivalently the matrix polynomial \cref{eq:matrix_polynomial}. Eigenvalues of the block companion matrix $C$ and the latent roots are related as follows.
\begin{theorem}
	$\det(C-\lambda I) = (-1)^{ns}\det(I\lambda^n - C_{n-1}\lambda^{n-1}-\cdots - C_0)$. 
\end{theorem}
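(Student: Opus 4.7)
The plan is to reduce $\det(C-\lambda I)$ to $(-1)^{ns}\det M(\lambda)$, where $M(\lambda)=I\lambda^n - C_{n-1}\lambda^{n-1}-\cdots-C_0$, by applying block elementary row operations followed by a single cyclic block row permutation. First I would write $C-\lambda I$ explicitly as an $n\times n$ array of $s\times s$ blocks: $-\lambda I$ on the first $n-1$ diagonal positions, $C_{n-1}-\lambda I$ at position $(n,n)$, the identity $I$ on the first block subdiagonal, and $C_{i-1}$ in position $(i,n)$ for $i<n$.

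Next I would apply the determinant-preserving block row operation
\[
\text{row}_1 \longleftarrow \text{row}_1 + \sum_{k=1}^{n-1}\lambda^k\,\text{row}_{k+1}.
\]
For each column $j\in\{1,\dots,n-1\}$, the $-\lambda^{j}I$ contribution coming from the diagonal $-\lambda I$ in row $j$ (entering with coefficient $\lambda^{j-1}$) is exactly cancelled by the $+\lambda^{j}I$ contribution coming from the subdiagonal $I$ in row $j+1$ (entering with coefficient $\lambda^{j}$). Hence the first $n-1$ blocks of row 1 vanish, while the last block collects to $C_0+\lambda C_1+\cdots+\lambda^{n-1}C_{n-1}-\lambda^{n} I = -M(\lambda)$. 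A cyclic block row permutation sending block row 1 to block row $n$ and shifting block rows $2,\ldots,n$ up then produces a block upper triangular matrix whose first $n-1$ diagonal blocks equal $I$ and whose bottom-right block equals $-M(\lambda)$; its determinant is therefore $(-1)^s\det M(\lambda)$.

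The main bookkeeping step, and the one non-mechanical point of the argument, is the sign of the block row permutation. At the scalar level it sends the first $s$ rows past the remaining $(n-1)s$ rows while preserving order within each group, so a pair $(i,j)$ with $i<j$ contributes an inversion if and only if $i\le s<j$. There are $s\cdot(n-1)s = s^{2}(n-1)$ such pairs, giving permutation sign $(-1)^{s^2(n-1)}$. Since $s$ and $s^2$ have the same parity, this equals $(-1)^{s(n-1)}$, and multiplying by the $(-1)^s$ obtained from $\det(-M(\lambda))$ yields the overall factor $(-1)^{s(n-1)+s}=(-1)^{sn}$, matching the claimed $(-1)^{ns}$. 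Everything else in the argument is a direct block-matrix version of the classical scalar companion-matrix determinant computation, so no further obstacles are expected.
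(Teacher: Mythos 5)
Your argument is correct and complete. Note that the paper itself offers no proof of this statement: the appendix explicitly adopts it from Dennis, Traub, and Weber \cite{Dennis1976algebraic}, so there is no in-paper argument to compare against. Your derivation is the standard one for (block) companion matrices and all the delicate points check out: the row operation $\text{row}_1 \leftarrow \text{row}_1 + \sum_{k=1}^{n-1}\lambda^k\,\text{row}_{k+1}$ does annihilate the first $n-1$ blocks of row~1 (the $j=1$ case uses the original $-\lambda I$ in row~1 itself, with coefficient $\lambda^0$, which is consistent with your indexing) and collects $\sum_{k=0}^{n-1}\lambda^k C_k - \lambda^n I = -M(\lambda)$ in the last block, where the $k=n-1$ term correctly absorbs the $-\lambda I$ sitting in the $(n,n)$ block; the cyclic shift then yields a block upper triangular matrix with diagonal blocks $I,\dots,I,-M(\lambda)$; and the inversion count $s^2(n-1)\equiv s(n-1)\pmod 2$ combined with $\det(-M(\lambda))=(-1)^s\det M(\lambda)$ gives exactly $(-1)^{ns}$. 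The only cosmetic remark is that you could state explicitly that $\det(C-\lambda I)=\det(P)\det(T)$ with $\det(P)=\det(P)^{-1}=\pm1$, but this is implicit and causes no gap.
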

\begin{corollary}
	The eigenvalues of the block companion matrix are the latent roots of the associated $\lambda$-matrix, therefore $M$ has exactly $ns$ latent roots.\end{corollary}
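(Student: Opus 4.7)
The plan is to reduce both claims to the determinantal identity stated in the immediately preceding theorem, namely
\begin{equation}
\det(C-\lambda I) = (-1)^{ns}\det\bigl(I\lambda^n - C_{n-1}\lambda^{n-1}-\cdots - C_0\bigr) = (-1)^{ns}\det M(\lambda).
\end{equation}
First I would recall the two relevant definitions: $\mu\in\mathbb{C}$ is a latent root of $M$ exactly when $M(\mu)$ is singular, which by definition of the determinant is the condition $\det M(\mu)=0$; and $\mu\in\mathbb{C}$ is an eigenvalue of $C$ exactly when $\det(C-\mu I)=0$. The identity above shows these two scalar polynomials in $\lambda$ differ only by the nonzero multiplicative constant $(-1)^{ns}$, so they have the same zero set (and the same multiplicities). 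This immediately gives the first half of the statement: the set of eigenvalues of $C$ coincides with the set of latent roots of $M$.

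For the counting statement, I would observe that $C\in\mathbb{C}^{ns\times ns}$, so its characteristic polynomial $\det(C-\lambda I)$ is a polynomial in $\lambda$ of degree exactly $ns$, with leading coefficient $(-1)^{ns}$; by the fundamental theorem of algebra it has exactly $ns$ roots, counted with multiplicity. Transporting this through the identity, $\det M(\lambda)$ is likewise a polynomial of degree $ns$ with leading coefficient $1$, and therefore has exactly $ns$ roots with multiplicity. Hence $M$ has exactly $ns$ latent roots.

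There is no real obstacle here since both assertions are essentially a direct rereading of the determinantal identity; the only mildly delicate point is verifying that the right-hand side $\det M(\lambda)$ is genuinely a polynomial of degree $ns$ in $\lambda$ (rather than something of lower degree). This follows from the explicit form $M(\lambda)=\lambda^n I-\sum_{k=0}^{n-1}\lambda^k C_k$: its leading term in $\lambda$ is $\lambda^n I$, and the determinant of this leading matrix coefficient is $1$, so the degree in $\lambda$ of $\det M(\lambda)$ is exactly $ns$ with leading coefficient $1$, consistent with the degree and leading sign of $\det(C-\lambda I)$.
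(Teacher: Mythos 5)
Your argument is correct and is exactly the intended derivation: the corollary is an immediate consequence of the preceding determinantal identity $\det(C-\lambda I)=(-1)^{ns}\det M(\lambda)$, which the paper (following Dennis, Traub, and Weber) leaves as a direct rereading without further proof. Your additional check that $\det M(\lambda)$ has degree exactly $ns$ with leading coefficient $1$, coming from the monic leading term $\lambda^n I$, is the right way to make the counting claim airtight.
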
 

We define the action of a matrix polynomial on a block vector as
\begin{equation}\label{eq:circ_operation}
M(A)\circ V = A^n\,V - \sum_{k=0}^{n-1}A^k\,V\,C_k, \quad A\in\Ce^{m\times m}, \ \ V\in\Ce^{m\times s},
\end{equation}
see \cite[p. 108]{Simoncini1996Convergence} or \cite[p. 107]{Frommer2017Block}.

\section{Auxiliary lemma}
We use MATLAB notation in this lemma. In particular, $Z_j$ denotes the $j$th block entry and $\bm{Z}_{1:j}$ denotes the first $j$ block entries of the block vector $\bm{Z}$.

\begin{lemma}\label{th:aux_lemma}
	Let $\bm{Z}\in\Es^k$ be such that $\bcal{I}-\bm{Z}\bm{Z}^*$ is positive definite, and let $\bcal{R}_Z$ be the unique upper triangular Cholesky factor of $\bcal{I}-\bm{Z}\bm{Z}^*$. Then 
	\begin{equation}
	|\bm{E}_j^T\bcal{R}_{Z}^{-*}\bm{Z}| = \sqrt{(I-\bm{Z}_{1:j}^*\bm{Z}_{1:j})^{-1} - (I-\bm{Z}_{1:j-1}^*\bm{Z}_{1:j-1})^{-1}}, \quad j = 1,\ldots,k.
	\end{equation}
\end{lemma}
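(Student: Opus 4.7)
The plan is to reduce the claim to a single block-row computation using the upper triangular structure of $\bcal{R}_Z$, and then to evaluate that row explicitly via a partitioned Cholesky identity combined with two applications of the push-through/Woodbury identity. Throughout, let $P_i := I - \bm{Z}_{1:i}^*\bm{Z}_{1:i}$ with $P_0 := I$. These matrices are positive definite, since $\bcal{I}-\bm{Z}\bm{Z}^*\succ 0$ together with the coincidence of the nonzero eigenvalues of $MM^*$ and $M^*M$ forces $P_i \succ 0$ for every $i \le k$.

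First, I would partition $\bcal{R}_Z$ conformally into $2\times 2$ block upper triangular form with leading block $\bcal{R}_{11}\in\Es^{j\times j}$. The Cholesky relation $\bcal{R}_Z^*\bcal{R}_Z = \bcal{I}-\bm{Z}\bm{Z}^*$ immediately forces $\bcal{R}_{11}^*\bcal{R}_{11} = \bcal{I}_j - \bm{Z}_{1:j}\bm{Z}_{1:j}^*$, and the block inverse formula shows that the $j$th block row of $\bcal{R}_Z^{-*}$ vanishes past column $j$ and agrees there with the last block row of $\bcal{R}_{11}^{-*}$. Consequently $\bm{E}_j^T\bcal{R}_Z^{-*}\bm{Z} = \bm{E}_j^T\bcal{R}_{11}^{-*}\bm{Z}_{1:j}$, so it suffices to handle the case in which $j$ is the final index of the block vector.

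Next, I would further partition $\bcal{R}_{11}$ into $2\times 2$ blocks with leading block $\bcal{R}'\in\Es^{(j-1)\times(j-1)}$, off-diagonal block $\bm{r}$, and trailing diagonal entry $r_{jj}\in\Es^+$, conforming to the split $\bm{Z}_{1:j} = [\bm{Z}_{1:j-1}^\ast,\ Z_j^\ast]^\ast$. Matching blocks in $\bcal{R}_{11}^*\bcal{R}_{11} = \bcal{I}_j - \bm{Z}_{1:j}\bm{Z}_{1:j}^*$ identifies $\bcal{R}'$ as the Cholesky factor of $\bcal{I}_{j-1}-\bm{Z}_{1:j-1}\bm{Z}_{1:j-1}^*$, gives $\bm{r}=-\bcal{R}'^{-*}\bm{Z}_{1:j-1}Z_j^*$, and, after invoking the push-through identity $I + \bm{Z}_{1:j-1}^*(\bcal{I}_{j-1}-\bm{Z}_{1:j-1}\bm{Z}_{1:j-1}^*)^{-1}\bm{Z}_{1:j-1} = P_{j-1}^{-1}$ on the $(j,j)$-block, yields the key relation $r_{jj}^*r_{jj} = I - Z_j P_{j-1}^{-1}Z_j^*$. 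The same push-through identity, applied to the block inverse expression $\bm{E}_j^T\bcal{R}_{11}^{-*} = [-r_{jj}^{-*}\bm{r}^*\bcal{R}'^{-*},\ r_{jj}^{-*}]$, collapses $\bm{E}_j^T\bcal{R}_{11}^{-*}\bm{Z}_{1:j}$ to the compact form $r_{jj}^{-*}Z_j P_{j-1}^{-1}$.

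Finally, taking $|\cdot|^2 = (\cdot)^*(\cdot)$ and substituting the formula for $r_{jj}^*r_{jj}$ produces
\[
\bigl(\bm{E}_j^T\bcal{R}_{11}^{-*}\bm{Z}_{1:j}\bigr)^*\bigl(\bm{E}_j^T\bcal{R}_{11}^{-*}\bm{Z}_{1:j}\bigr) = P_{j-1}^{-1}Z_j^*\bigl(I - Z_jP_{j-1}^{-1}Z_j^*\bigr)^{-1}Z_j P_{j-1}^{-1},
\]
which is exactly the Woodbury expansion of $(P_{j-1}-Z_j^*Z_j)^{-1}-P_{j-1}^{-1} = P_j^{-1}-P_{j-1}^{-1}$; taking the unique $\Es_0^+$-valued square root then yields the claim. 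The only real obstacle is the bookkeeping across the two push-through manipulations in the non-commutative setting of $\Es$. Since both the push-through identity and the Woodbury formula hold without any commutativity assumption on their entries, the symmetric cancellations go through cleanly.
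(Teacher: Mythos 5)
Your argument is correct and follows essentially the same route as the paper's proof: both reduce to the last block row via the lower-triangularity of $\bcal{R}_Z^{-*}$, use the push-through identity to move the relevant inverse past $\bm{Z}$, invoke the formula $r_{jj}^*r_{jj}=I-Z_jP_{j-1}^{-1}Z_j^*$ for the trailing Cholesky block, and finish by simplifying the Gram matrix to $P_j^{-1}-P_{j-1}^{-1}$. The only differences are organizational — you derive the trailing Cholesky entry from the partitioned equations and package the final cancellation as an explicit Woodbury step, where the paper quotes the Cholesky formula and simplifies by hand — and the resulting intermediate expressions ($r_{jj}^{-*}Z_jP_{j-1}^{-1}$ versus the paper's $r_{jj}Z_jP_j^{-1}$) are readily seen to coincide.
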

\begin{proof}
	Since $\bcal{R}_{Z}^{-*}$ is lower triangular, it holds that	
	\begin{equation}
	\bm{E}_j^T\bcal{R}_{Z}^{-*}\bm{Z} = \bm{E}_j^T\bcal{R}_{Z_{1:j}}^{-*}\bm{Z}_{1:j}.
	\end{equation}
	Therefore, it suffices to investigate $\bm{E}_k^T\bcal{R}_{Z}^{-*}\bm{Z}$ and apply the lemma recursively.
	
	We observe that
	\begin{align}
	\bm{E}_k^T\bcal{R}_{Z}^{-*}\bm{Z} = \bm{E}_k^T\bcal{R}_{Z}(\bcal{R}_{Z}^*\bcal{R}_{Z})^{-1}\bm{Z} &= \bm{E}_k^T\bcal{R}_{Z}(\bcal{I}-\bm{Z}\bm{Z}^*)^{-1}\bm{Z}\\ 
	&= \bm{E}_k^T\bcal{R}_{Z}\bm{Z}(I-\bm{Z}^*\bm{Z})^{-1}\\
	& = \bm{E}_k^T\bcal{R}_{Z}\bm{E}_k\bm{E}_k^T\bm{Z}(I-\bm{Z}^*\bm{Z})^{-1},
	\end{align}
	where we used the push-through identity (Woodbury matrix identity) and the fact that $\bcal{R}_Z$ is upper triangular.
	
	The block entry $\bm{E}_k^T\bcal{R}_{Z}\bm{E}_k$, i.e., the $(k,k)$ block entry of the Cholesky factor, can be obtained as
	\begin{equation}
	\bm{E}_k^T\bcal{R}_{Z}\bm{E}_k = \sqrt{I-Z_{k}(I-\bm{Z}_{1:k-1}^*\bm{Z}_{1:k-1})^{-1}Z_{k}^*}.
	\end{equation}
	Thus
	\begin{align}
	&\ssxy{\bm{E}_k^T\bcal{R}_{Z}^{-*}\bm{Z}}{\bm{E}_k^T\bcal{R}_{Z}^{-*}\bm{Z}}\\ 
	&\hspace{1.8cm}= (I-\bm{Z}^*\bm{Z})^{-*}Z_{k}^*\left(I-Z_{k}(I-\bm{Z}_{1:k-1}^*\bm{Z}_{1:k-1})^{-1}Z_{k}^*\right)Z_{k}(I-\bm{Z}^*\bm{Z})^{-1}\\
	&\hspace{1.8cm}= (I-\bm{Z}^*\bm{Z})^{-1}Z_{k}^*Z_{k}\left(I-(I-\bm{Z}_{1:k-1}^*\bm{Z}_{1:k-1})^{-1}Z_{k}^*Z_{k}\right)(I-\bm{Z}^*\bm{Z})^{-1}\\
	&\hspace{1.8cm}= (I-\bm{Z}^*\bm{Z})^{-1}Z_{k}^*Z_{k}(I-\bm{Z}_{1:k-1}^*\bm{Z}_{1:k-1})^{-1}\left(I-\bm{Z}^*\bm{Z}\right)(I-\bm{Z}^*\bm{Z})^{-1}\\
	&\hspace{1.8cm}= (I-\bm{Z}^*\bm{Z})^{-1}\left((I-\bm{Z}_{1:k-1}^*\bm{Z}_{1:k-1})- (I-\bm{Z}^*\bm{Z})\right)(I-\bm{Z}_{1:k-1}^*\bm{Z}_{1:k-1})^{-1}\\
	&\hspace{1.8cm}=(I-\bm{Z}^*\bm{Z})^{-1} - (I-\bm{Z}_{1:k-1}^*\bm{Z}_{1:k-1})^{-1},
	\end{align}
	which gives the desired statement.
\end{proof}
Note that $\bcal{I}-\bm{Z}\bm{Z}^*$ is positive definite if and only if $\bcal{I}-\bm{Z}^*\bm{Z}$ is positive definite.

\section*{Acknowledgments}
The authors would like to thank G\'{e}rard Meurant for his valuable suggestions regarding presentation of the result of \Cref{sec:spectral_A,sec:spectral_H,sec:Arnoldi_vs_GMRES}. We are also grateful to two anonymous referees and the handling editor
for their comments and suggestions.


\begin{thebibliography}{10}

\bibitem{Arioli1998Krylov}
{\sc M.~Arioli, V.~Pt{\'a}k, and Z.~Strako{\v{s}}}, {\em Krylov sequences of
  maximal length and convergence of {GMRES}}, BIT Numerical Mathematics, 38
  (1998), pp.~636--643, \url{https://doi.org/10.1007/BF02510405}.

\bibitem{frommer.deflCG.2014}
{\sc S.~Birk and A.~Frommer}, {\em A deflated conjugate gradient method for
  multiple right hand sides and multiple shifts}, Numer. Algorithms, 67 (2014),
  pp.~507--529, \url{https://doi.org/10.1007/s11075-013-9805-9}.

\bibitem{Bjoerck1996Numerical}
{\sc {\AA}.~Bj{\"o}rck}, {\em Numerical methods for least squares problems},
  Society for Industrial and Applied Mathematics (SIAM), Philadelphia, PA,
  1996, \url{https://doi.org/10.1137/1.9781611971484}.

\bibitem{Cullum1996Relations}
{\sc J.~Cullum and A.~Greenbaum}, {\em Relations between {G}alerkin and
  norm-minimizing iterative methods for solving linear systems}, SIAM Journal
  on Matrix Analysis and Applications, 17 (1996), pp.~223--247,
  \url{https://doi.org/10.1137/S0895479893246765}.

\bibitem{Dennis1976algebraic}
{\sc J.~E. Dennis, Jr, J.~F. Traub, and R.~P. Weber}, {\em The algebraic theory
  of matrix polynomials}, SIAM Journal on Numerical Analysis, 13 (1976),
  pp.~831--845, \url{https://doi.org/10.1137/0713065}.

\bibitem{Tebbens2011Analyza}
{\sc J.~Duintjer~Tebbens, I.~Hn\v{e}tynkov\'{a}, M.~Ple\v{s}inger,
  Z.~Strako\v{s}, and P.~Tich\'{y}}, {\em Anal{\'y}za metod pro maticov{\'e}
  v{\'y}po{\v{c}}ty}, MatfyzPress, 2011.

\bibitem{DuintjerTebbens2012Any}
{\sc J.~Duintjer~Tebbens and G.~Meurant}, {\em Any {R}itz value behavior is
  possible for {A}rnoldi and for {GMRES}}, SIAM Journal on Matrix Analysis and
  Applications, 33 (2012), pp.~958--978,
  \url{https://doi.org/10.1137/110843666}.

\bibitem{DuintjerTebbens2014Prescribing}
{\sc J.~Duintjer~Tebbens and G.~Meurant}, {\em Prescribing the behavior of
  early terminating {GMRES} and {A}rnoldi iterations}, Numerical Algorithms, 65
  (2014), pp.~69--90, \url{https://doi.org/10.1007/s11075-013-9695-x}.

\bibitem{Edelman1995Polynomial}
{\sc A.~Edelman and H.~Murakami}, {\em Polynomial roots from companion matrix
  eigenvalues}, Mathematics of Computation, 64 (1995), pp.~763--776,
  \url{https://doi.org/10.2307/2153450}.

\bibitem{freund.blockqmr.defl.1997}
{\sc R.~W. Freund and M.~Malhotra}, {\em A block {QMR} algorithm for
  non-{H}ermitian linear systems with multiple right-hand sides}, in
  Proceedings of the {F}ifth {C}onference of the {I}nternational {L}inear
  {A}lgebra {S}ociety ({A}tlanta, {GA}, 1995), vol.~254, 1997, pp.~119--157,
  \url{https://doi.org/10.1016/S0024-3795(96)00529-0}.

\bibitem{Frommer2017Block}
{\sc A.~Frommer, K.~Lund, and D.~B. Szyld}, {\em Block {K}rylov subspace
  methods for functions of matrices}, Electronic Transactions on Numerical
  Analysis, 47 (2017), pp.~100--126,
  \url{https://doi.org/10.1553/etna_vol47s100}.

\bibitem{Frommer2019Block}
{\sc A.~Frommer, K.~Lund, and D.~B. Szyld}, {\em Block {K}rylov subspace
  methods for functions of matrices {II}: {M}odified block {FOM}}, tech.
  report, MATHICSE, Écublens, 2019,
  \url{https://doi.org/10.5075/epfl-MATHICSE-265508}.

\bibitem{Gohberg2009Matrix}
{\sc I.~Gohberg, P.~Lancaster, and L.~Rodman}, {\em Matrix polynomials},
  vol.~58 of Classics in Applied Mathematics, SIAM, 2009,
  \url{https://doi.org/10.1137/1.9780898719024}.

\bibitem{Golub2013Matrix}
{\sc G.~H. Golub and C.~F. Van~Loan}, {\em Matrix computations}, Johns Hopkins
  Studies in the Mathematical Sciences, Johns Hopkins University Press,
  Baltimore, MD, fourth~ed., 2013.

\bibitem{Greenbaum1996Any}
{\sc A.~Greenbaum, V.~Pt{\'a}k, and Z.~Strako{\v{s}}}, {\em Any nonincreasing
  convergence curve is possible for {GMRES}}, SIAM Journal on Matrix Analysis
  and Applications, 17 (1996), pp.~465--469,
  \url{https://doi.org/10.1137/S0895479894275030}.

\bibitem{Greenbaum1994Matrices}
{\sc A.~Greenbaum and Z.~Strako\v{s}}, {\em Matrices that generate the same
  {K}rylov residual spaces}, in Recent Advances in Iterative Methods, G.~Golub,
  M.~Luskin, and A.~Greenbaum, eds., New York, NY, 1994, Springer New York,
  pp.~95--118, \url{https://doi.org/10.1007/978-1-4613-9353-5_7}.

\bibitem{HalleckBlock}
{\sc J.~Halleck}, {\em Block matrix {G}ivens rotation}.
\newblock \url{http://www.cc.utah.edu/~nahaj/math/blockgivens.html}.

\bibitem{Loewner1934Uber}
{\sc K.~L{\"o}wner}, {\em {\"U}ber monotone matrixfunktionen}, Mathematische
  Zeitschrift, 38 (1934), pp.~177--216,
  \url{https://doi.org/10.1007/bf01170633}.

\bibitem{Meurant2012GMRES}
{\sc G.~Meurant}, {\em {GMRES} and the {A}rioli, {P}t{\'a}k, and
  {S}trako{\v{s}} parametrization}, BIT Numerical Mathematics, 52 (2012),
  pp.~687--702, \url{https://doi.org/10.1007/s10543-012-0374-4}.

\bibitem{Saad2003Iterative}
{\sc Y.~Saad}, {\em Iterative methods for sparse linear systems}, Society for
  Industrial and Applied Mathematics, Philadelphia, PA, second~ed., 2003,
  \url{https://doi.org/10.1137/1.9780898718003}.

\bibitem{Saad1986GMRES}
{\sc Y.~Saad and M.~H. Schultz}, {\em G{MRES}: a generalized minimal residual
  algorithm for solving nonsymmetric linear systems}, SIAM Journal on
  Scientific and Statistical Computing, 7 (1986), pp.~856--869,
  \url{https://doi.org/10.1137/0907058}.

\bibitem{Schreiber1988Block}
{\sc R.~Schreiber and B.~Parlett}, {\em Block reflectors: {T}heory and
  computation}, SIAM Journal on Numerical Analysis, 25 (1988), pp.~189--205,
  \url{https://doi.org/10.1137/0725014}.

\bibitem{Schreiber1989storage}
{\sc R.~Schreiber and C.~Van~Loan}, {\em A storage-efficient {WY}
  representation for products of {H}ouseholder transformations}, SIAM Journal
  on Scientific and Statistical Computing, 10 (1989), pp.~53--57,
  \url{https://doi.org/10.1137/0910005}.

\bibitem{Simoncini1996Convergence}
{\sc V.~Simoncini and E.~Gallopoulos}, {\em Convergence properties of block
  {GMRES} and matrix polynomials}, Linear Algebra and its Applications, 247
  (1996), pp.~97 -- 119, \url{https://doi.org/10.1016/0024-3795(95)00093-3}.

\bibitem{soodhalter.bminres.2015}
{\sc K.~M. Soodhalter}, {\em A block {MINRES} algorithm based on the band
  {L}anczos method}, Numer. Algorithms, 69 (2015), pp.~473--494,
  \url{https://doi.org/10.1007/s11075-014-9907-z}.

\bibitem{Soodhalter2017Stagnation}
{\sc K.~M. Soodhalter}, {\em Stagnation of block {GMRES} and its relationship
  to block {FOM}}, Electronic Transactions on Numerical Analysis, 46 (2017),
  pp.~162--189.

\end{thebibliography}
\end{document}